\newtheorem{theorem}{Theorem}[section]
\newtheorem{lemma}[theorem]{Lemma}
\newtheorem{proposition}[theorem]{Proposition}
\newtheorem{corollary}[theorem]{Corollary}
\newtheorem{example}[theorem]{Example}
\begin{document}

\numberwithin{equation}{section}

\title{ Ricci Almost Solitons on semi-Riemannian Warped Products}

\author{Valter Borges\footnote{Universidade de Bras\'ilia, Department of Mathematics, 70910-900, Bras\'ilia-DF, Brazil, valterb@mat.unb.br.Supported by CNPq Proc. 248877/2013-5  Ministry of Science and Technology, Brazil.  } \quad Keti Tenenblat\footnote{ Universidade de Bras\'{\i}lia,
 Department of Mathematics,
 70910-900, Bras\'{\i}lia-DF, Brazil, K.Tenenblat@mat.unb.br Partially supported by CNPq Proc. 312462/2014-0, Ministry of Science and Technology, Brazil.}
  }

\date{}

\maketitle{}

\begin{abstract}
It is shown that a gradient  Ricci almost soliton on a warped product, $(B^n\times_h F^m, g,f,\lambda)$
  whose potential function $f$ depends on the fiber,  is either a Ricci soliton  or $\lambda$ is not constant and the warped product, the base and the fiber are  Einstein manifolds, which admit conformal vector fields. 
  Assuming completeness, a classification is provided for the Ricci almost solitons on warped products, whose potential functions depend on the fiber.
  An important decomposition property of the potential function in terms of functions which depend either on the base or on the fiber is proven. In the case of a complete Ricci soliton, the potential function depends only on the base.

\end{abstract}
\vspace{0.2cm} 
\noindent \emph{2010 Mathematics Subject Classification} :  
 53C20, 
 53C21, 
 53C25, 
 53C44, 
 53C50 \\
\noindent \emph{Keywords}: Ricci Almost Solitons, Ricci Solitons, Warped Products, Conformal Fields, Einstein manifolds.

 \section{Introduction}
 
 A {\it Ricci almost soliton} $(M,g,X,\lambda)$ is a Riemannian or semi-Riemannian manifold $(M,g)$ with a vector field $X$ and a smooth function $\lambda:M\rightarrow\mathbb{R}$ satisfying the following fundamental equation
 \begin{eqnarray*}
 	\displaystyle Ric+\frac{1}{2}\mathfrak{L}_{X}g=\lambda g.
 \end{eqnarray*}
 If the vector field $X$ is the gradient field of some function $f:M\rightarrow\mathbb{R}$, then the soliton is called a {\it gradient Ricci almost soliton}, or just {\it Ricci almost soliton}. In this case, it is denoted by  $(M,g,f,\lambda)$ and the fundamental equation becomes
 \begin{eqnarray}\label{eqriccisoliton}
 Ric+\nabla\nabla f=\lambda g,
 \end{eqnarray}
 where $f$ is called the \textit{potential function} and $\nabla\nabla f$ is the Hessian of $f$ with respect to the metric $g$. We say that a Ricci almost soliton is {\it shrinking, steady, expanding or undefined} if the function $\lambda$ is positive, null, negative or changes sign, respectively. The concept of Ricci almost soliton was introduced in \cite{pigola} generalizing the notion of Ricci soliton,  which is the case when the function $\lambda$ is constant. The importance of the Ricci solitons is due to their relation with the Ricci flow. In fact, they are stationary solutions of the Ricci flow, that was introduced by Hamilton \cite{hamilton1}. If the function $\lambda$ is not  constant, then Ricci almost solitons evolve under the Ricci flow changing only by conformal diffeomorphisms (see \cite{sharma} and \cite{tesenazareno} page 4). 
 Another relation with geometric flows is obtained by choosing  specific 
 functions for $\lambda$, for which  
 the corresponding Ricci almost solitons are self similar solutions of the so called Ricci-Bourguignon flow \cite{brozos}. This flow is an interpolation between the Ricci flow and the Yamabe flow \cite{catino2}. On the other hand Ricci almost solitons can be viewed as a generalization of Einstein manifolds \cite{besse}, as one can easily see by considering a constant function on an Einstein manifold.
 
Motivated by these relations with geometric flows, Ricci solitons and Einstein manifolds we are interested  in  investigating  the geometry of such manifolds and  we consider problems such as when a Ricci almost soliton becomes a Ricci soliton or even an Einstein manifold. In \cite{AltayGuler}, \cite{barros2}, \cite{barros3}, \cite{barros5}, \cite{grosh2}, \cite{HuangWei}, \cite{pigola} and \cite{shin} the authors proved that under certain geometric constraints a Ricci almost soliton becomes a Ricci soliton or an Einstein manifold carrying a conformal field.
 
Examples of Ricci almost solitons can be found in \cite{barros4}, \cite{feitosa2}, \cite{Nurowski} and \cite{pigola}. We observe that in \cite{pigola} there are examples of Riemannian manifolds that do not admit   Ricci almost soliton structures. For some results on Ricci almost solitons under certain geometric obstructions, we refer the reader to the  papers \cite{barros1}, \cite{barros2}, \cite{barros3}, \cite{barros4}, \cite{brozos}, \cite{calvino}, \cite{grosh1}  and references therein.
  
The warped products played an important role in the construction of non trivial Ricci solitons, as well as of Einstein manifolds. The cigar soliton, and more generally, the Bryant soliton, were obtained by using local warped product construction. More examples can be found in \cite{cao}, \cite{feitosa1}, \cite{sousa} and in references therein. It is important to note that warped products naturally arose in the studies of related structures on semi-Riemannian manifolds, as one can see for instance in \cite{AltayGuler}, \cite{HuangWei} and \cite{shin}.
 
Ricci almost solitons on warped products were studied firstly in \cite{feitosa2}. The authors gave a systematic approach using the hypothesis that the function $\lambda$ depends only on the base. Under this condition, they proved that the potential function $f$ depends only on the base. 

In this paper,  we start by characterizing  Ricci almost solitons on non trivial warped products in both situations, i.e., when the potential function depends on the fiber (Theorem \ref{withdependence}) and when it does not depend on the fiber (Theorem \ref{withoutdependence}), completing the previous study. We show that in both cases,  the fiber is necessarily an Einstein manifold. 

We then concentrate our main results in the case when the potential function depends on the fiber. In this case,  we show a rigity result in the sense that a Ricci almost soliton 
 $(B^n\times_{h}F^m,g,f,\lambda)$ reduces either to a Ricci soliton or $\lambda$ is not constant and $(B^n\times_{h}F^m,g)$ is an Einstein manifold (see Corollary \ref{rigidityresult}). Moreover, assuming completeness, Theorem \ref{coroclassificationnew} provides the classification of the Ricci almost solitons on warped products, whose potential functions depend on the fiber.
 
 These main results are obtained as a consequence of the following steps: 
 Proposition \ref{decompose} shows that for any Ricci almost soliton the potential 
 function $f$ decomposes in terms of functions that depend either on the base or on the 
 fiber. Then, using the characterization given in Theorem \ref{withdependence}, we prove  in Theorem \ref{impropercase}, that if the gradient of the warping function $h$ is an improper vector field, then the Ricci almost soliton is actually a Ricci soliton, the base is a Brinkman space and  the fiber is Ricci flat, moreover, we classify the fiber when it is complete.  When the gradient of the warping function is a proper vector field, then we prove in Theorem \ref{propercase} that it is either a Ricci soliton, i.e., $\lambda$ is constant or $\lambda$ is not constant and  $(B^n\times_{h}F^m,g)$, the base and the fiber are Einstein manifolds. Moreover, Theorems \ref{withdependence}  and  \ref{propercase} show that  the base, the fiber and the warped product admit conformal vector fields. The existence of such vector fields plays an important role in the classification of complete Riemannian and semi-Riemannian Einstein manifolds and it is used in the proof of Theorem 
\ref{coroclassificationnew}.

We observe that the assumption that the potential function depends only on the base is made in all constructions of Ricci solitons using warped product, as we can see in \cite{bryant} and \cite{danwan}, to name a few.  As a consequence of our approach, we conclude in Corollary \ref{nondependenceontehfiberriem} that the potential function of a Ricci soliton on a complete warped product depends only on the base, showing that this hypothesis can be eliminated. This result was also considered in \cite{sousa} with a different approach.  
   
This paper is organized as follows. In Section 2, we state our main results. Section 3, contains basic definitions and classical results needed for the proofs. In Section 4, we prove our main results. In the Appendix we collect all the results on conformal vector fields that will be used throughout the paper.



\section{Main Results}

\qquad In this section, we will state our main results. The proofs will be given in Section 4. In this paper, we are considering warped products $(B^n\times_h F^m,g)$ , where $(B^n,g_B)$ and $(F^m,g_F)$ are either Riemannian or semi-Riemannian manifolds and $g=g_B+h^2g_F$. We are  assuming  that $h$ is not constant. In this case we say that the warped product is not trivial. In what follows we will denote the connection, the Ricci curvature and other tensors defined using the metric $g_{B}$ with a subscript $B$, as $\nabla_B$, $Ric_{B}$. Similar notation will be considered for the metric $g_{F}$.

 Our first result says that for a Ricci almost soliton on a warped product, when the potential function depends on the fiber then the fundamental equation (\ref{eqriccisoliton}) on a warped product reduces to a system of equations on the base and on the fiber, in the following way:

\begin{theorem}\label{withdependence}
	Let $B^n\times_h F^m$ be a non trivial warped product where the base $(B^n,g_B)$ or 
the fiber $(F^m,g_{F})$ can be either a Riemannian or a semi-Riemannian manifold. Then 
$(B^n\times_{h}F^m,g,f,\lambda)$ is a Ricci almost soliton, with $f$ non constant on $F$
if, and only if, $f=\beta +h\varphi$, where $\varphi:F\rightarrow \mathbb{R}$ is not constant and  $\beta:B\rightarrow \mathbb{R}$ are differentiable functions such that 
	\begin{equation}\label{system_withdependence}
	\left\{ 
	\begin{array}[pos]{ll}
	\nabla_{B}\nabla_{B}h+ahg_{B}=0,\\\noalign{\smallskip}
	Ric_{B}+\nabla_{B}\nabla_{B}\beta=[h^{-1}(\nabla_{B}h)\beta-bh^{-1}+(n-1)a]g_{B}, \\\noalign{\smallskip}
	\nabla_{F}\nabla_{F}\varphi+(c\varphi+b)g_{F}=0,\\\noalign{\smallskip}
	Ric_{F}=(m-1)cg_{F},
		\end{array}
	\right.
	\end{equation}
	for some constants $a, \ b,\ c \in\mathbb{R}$, the function $\lambda$ is given by
\begin{equation}\label{lambda}
\lambda=h^{-1}(\nabla_{B}h)\beta-bh^{-1}+(m+n-1)a-ah\varphi,
\end{equation}
and the constants $a$ and $c$ are related to $h$ by the equation
	\begin{equation}\label{relation}
	|\nabla_{B}h|^{2}+ah^{2}=c.
	\end{equation}
\end{theorem}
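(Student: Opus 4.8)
The plan is to prove both implications, with the substantive work in the forward direction. Throughout I write the warped metric as $g=g_B+h^2g_F$ and use the standard O'Neill formulas for the connection, the Ricci tensor, and the Hessian on a warped product (which I assume are recorded in Section 3): for $X,Y$ horizontal and $V,W$ vertical one has $Ric(X,Y)=Ric_B(X,Y)-\tfrac{m}{h}\nabla_B\nabla_B h(X,Y)$, $Ric(X,V)=0$, and $Ric(V,W)=Ric_F(V,W)-\big(\tfrac{\Delta_B h}{h}+(m-1)\tfrac{|\nabla_B h|^2}{h^2}\big)h^2g_F(V,W)$. For the converse implication ($\Leftarrow$) I would simply substitute $f=\beta+h\varphi$ together with the four equations of (\ref{system_withdependence}), the relation (\ref{relation}) and the expression (\ref{lambda}) into the horizontal, mixed and vertical components of the fundamental equation $Ric+\nabla\nabla f=\lambda g$ and verify each identity; this is a direct computation once the Hessian of $f=\beta+h\varphi$ has been recorded.

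The forward implication ($\Rightarrow$) is the heart of the matter and proceeds by computing $\nabla\nabla f$ and separating variables. First I would compute the mixed Hessian $\nabla\nabla f(X,V)=X(Vf)-\tfrac{X(h)}{h}Vf$; since $Ric(X,V)=0=g(X,V)$, the fundamental equation forces $\nabla\nabla f(X,V)=0$, that is $X(Vf/h)=0$, so $Vf/h$ is independent of $B$. Integrating along the fiber yields $f=\beta+h\varphi$ with $\beta:B\to\mathbb R$ and $\varphi:F\to\mathbb R$, where $\varphi$ is non-constant because $f$ is non-constant on $F$. Substituting this $f$ into the horizontal component gives $Ric_B+\nabla_B\nabla_B\beta+(\varphi-\tfrac{m}{h})\nabla_B\nabla_B h=\lambda g_B$; evaluating at two fiber points with distinct values of $\varphi$ and subtracting shows $\nabla_B\nabla_B h=\rho\,g_B$ for some $\rho:B\to\mathbb R$, and then that $\lambda=\mu+(\varphi-\tfrac{m}{h})\rho$ with $Ric_B+\nabla_B\nabla_B\beta=\mu g_B$ for some $\mu:B\to\mathbb R$.

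Next I would feed $f=\beta+h\varphi$, $\nabla_B\nabla_B h=\rho g_B$ and the expression for $\lambda$ into the vertical component. A symmetric separation, now exploiting that $h$ is non-constant on $B$, yields $\nabla_F\nabla_F\varphi=\sigma g_F$ and $Ric_F=\tau g_F$ for functions $\sigma,\tau$ on $F$, together with one scalar relation $\tau+h\sigma+S=0$ coupling base and fiber quantities, where $S$ is an explicit expression in $h,\rho,\beta,\mu,\varphi$. Subtracting this relation at two base points with distinct $h$-values forces $\sigma$ to be affine in $\varphi$, say $\sigma=-(c\varphi+b)$, which is the third equation of (\ref{system_withdependence}); the concircular identity $\nabla\sigma=-\tfrac{\tau}{m-1}\nabla\varphi$ (valid since $\nabla_F\varphi\neq0$ somewhere) then gives $\tau=(m-1)c$ with $c$ constant, the fourth equation. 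Comparing the $\varphi$-coefficient in the coupling relation produces $|\nabla_B h|^2=c+\rho h$; differentiating this along $B$ and using $\nabla_B\nabla_B h=\rho g_B$ yields $X(\rho/h)=0$, so $\rho=-ah$ for a constant $a$. This is the first equation $\nabla_B\nabla_B h+ahg_B=0$, and back-substitution recovers the relation (\ref{relation}). Finally, reading off the $\varphi$-free part of the coupling relation determines $\mu=h^{-1}(\nabla_B h)\beta-bh^{-1}+(n-1)a$, which is the second equation, and inserting $\rho=-ah$ into $\lambda=\mu+(\varphi-\tfrac{m}{h})\rho$ gives exactly (\ref{lambda}).

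I expect the main obstacle to be the bookkeeping in the coupling relation $\tau+h\sigma+S=0$: it simultaneously involves the two independent non-constant functions $\varphi$ on $F$ and $h$ on $B$, and one must show that the three constants $a,b,c$ are the \emph{same} constants occurring in all four equations and in (\ref{lambda})--(\ref{relation}). The decisive tools are the non-constancy of $\varphi$ and of $h$, which allow clean separation by evaluation at distinct points, and the concircular identity, which upgrades the pointwise proportionalities $\nabla_F\nabla_F\varphi=\sigma g_F$ and $Ric_F=\tau g_F$ to their precise forms with constant $c$ and thereby shows that $F$ is necessarily Einstein.
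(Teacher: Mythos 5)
Your outline is correct and reaches the theorem by the same overall strategy as the paper --- decompose $f$ through the mixed Hessian, separate variables in the horizontal and vertical components, and use the traced Ricci identity to force the fiber to be Einstein --- but the separation step is executed genuinely differently. The paper first shows that all off-diagonal components of $\nabla_{B}\nabla_{B}h$, $Ric_{B}+\nabla_{B}\nabla_{B}\beta$, $\nabla_{F}\nabla_{F}\varphi$ and $Ric_{F}$ vanish (by differentiating the equations along the fiber, resp.\ the base), then works in adapted orthogonal frames and applies a dedicated two-variable separation lemma (Lemma \ref{important_lemma}) to each pair of diagonal components, after which it must still check that the resulting constants do not depend on the frame indices and patch the conclusion from a product of neighborhoods $D\times G$ to all of $B\times F$. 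Your device --- subtracting the full tensor equation evaluated at two fiber points with $\varphi(q_1)\neq\varphi(q_2)$ (resp.\ two base points with $h(p_1)\neq h(p_2)$) --- yields the proportionalities $\nabla_{B}\nabla_{B}h=\rho g_{B}$, $Ric_{B}+\nabla_{B}\nabla_{B}\beta=\mu g_{B}$, $\nabla_{F}\nabla_{F}\varphi=\sigma g_{F}$, $Ric_{F}=\tau g_{F}$ globally in one stroke and eliminates both the frame bookkeeping and the index-independence argument; your subsequent identification $\rho=-ah$ via $X(\rho/h)=0$ and the reading-off of $\mu$ and $\lambda$ agree with the paper's computations, and your ``concircular identity'' is exactly the paper's Bochner step. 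One point to make explicit when writing this up: the identity $(\tau-(m-1)c)\,d\varphi=0$ determines $\tau$ only on the open set where $d\varphi\neq0$, so to conclude $Ric_{F}=(m-1)cg_{F}$ everywhere you should first note (by fixing $p$ in the coupling relation) that $\tau$ is an affine function of $\varphi$ with constant coefficients, whence its value on a nonempty open set determines it globally --- this is precisely the role played by the constants $\tilde{b},\tilde{c}$ in the paper's argument.
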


Equations such as the first or  third equations  of (\ref{system_withdependence}) have appeared in many contexts. They appeared for example in concircular transformations \cite{kentaro}, in conformal transformations between Einstein spaces \cite{kunel} and in conformal vector fields on Einstein manifolds \cite{kunel2}.

A function satisfying equation (\ref{relation}) is said to have  constant energy, following \cite{castaneda}, where the author investigated properties of such functions. Equation (\ref{relation}) also appeared in the Critical Point Equation conjecture \cite{neto}.

As an application of Theorem \ref{withdependence} we can prove that for a complete warped product Ricci solitons (that is, when $\lambda$ is a constant) the potential function does not depend on the fiber.

\begin{corollary}\label{nondependenceontehfiberriem}
	Let $(B\times_{h}F,g,f,\lambda)$ be a Ricci soliton on a complete non trivial Riemannian or semi-Riemannian warped product. Then $f$ does not  depend on the fiber.
\end{corollary}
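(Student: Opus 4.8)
The plan is to argue by contradiction: I assume that $f$ \emph{does} depend on the fiber and show this is incompatible with $\lambda$ being constant once completeness is invoked. Since $f$ depends on $F$, Theorem \ref{withdependence} applies and yields $f=\beta+h\varphi$ with $\varphi:F\to\mathbb{R}$ non constant, together with the system (\ref{system_withdependence}), the expression (\ref{lambda}) for $\lambda$, and the energy relation (\ref{relation}). The whole argument then hinges on extracting, from the constancy of $\lambda$, a rigidity statement for the warping function $h$, and on using completeness to turn that rigidity into a contradiction with $h$ being a (positive) warping function.

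First I would isolate the fiber dependence of $\lambda$. In (\ref{lambda}) every term except $-ah\varphi$ is a function of the base alone, so for any vector $Y$ tangent to $F$ one has $Y(\lambda)=-ah\,Y(\varphi)$. Since the soliton hypothesis means $\lambda$ is constant, this forces $ah\,Y(\varphi)=0$ for all such $Y$. As $h$ is a warping function it is nowhere zero, and $\varphi$ is non constant, so there exist a point and a direction with $Y(\varphi)\neq0$; hence $a=0$.

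Next I would feed $a=0$ back into the system. The first equation of (\ref{system_withdependence}) collapses to $\nabla_{B}\nabla_{B}h=0$, so $\nabla_{B}h$ is a parallel vector field on $B$; since $h$ is non constant (the warped product is non trivial) this field is not identically zero, and being parallel on the connected manifold $B$ it is nowhere zero. I would then use completeness. Because the base leaves $B\times\{q\}$ are totally geodesic in a warped product, every geodesic of $B$ is realized as a geodesic of $B\times_{h}F$ with constant fiber coordinate; as the product is complete, $B$ is geodesically complete. Choose $p\in B$ and $W\in T_pB$ with $g_{B}(\nabla_{B}h,W)\neq0$, which is possible because $\nabla_{B}h|_p\neq0$ and $g_{B}$ is non degenerate, and let $\gamma$ be the geodesic with $\gamma(0)=p$, $\dot\gamma(0)=W$, defined on all of $\mathbb{R}$. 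Differentiating $h\circ\gamma$ twice and using that $\nabla_{B}h$ is parallel and $\gamma$ is a geodesic gives $(h\circ\gamma)''\equiv0$, so $h\circ\gamma$ is affine with nonzero slope $g_{B}(\nabla_{B}h|_p,W)$; thus $h\circ\gamma$ attains every real value, in particular $0$. This contradicts the non-vanishing of the warping function, and the contradiction shows that $f$ cannot depend on the fiber.

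The one step I expect to carry any subtlety is the passage from completeness of the product to completeness of geodesics of the base; it rests on the classical fact that the base leaves of a warped product are totally geodesic, so a geodesic of $B$ lifts to a complete geodesic of $B\times_{h}F$. Everything else is a short computation. I would also point out that the argument is uniform in the sign of $c=|\nabla_{B}h|^{2}$ from (\ref{relation}): whether $\nabla_{B}h$ is null (improper) or not, the affine growth of $h$ along $\gamma$ uses only $g_{B}(\nabla_{B}h,W)\neq0$, which non-degeneracy of $g_{B}$ always supplies.
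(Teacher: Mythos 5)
Your proposal is correct and follows essentially the same route as the paper: both derive $a=0$ from the constancy of $\lambda$ via (\ref{lambda}), conclude that $\nabla_{B}h$ is parallel from the first equation of (\ref{system_withdependence}), and then derive a contradiction from the affine behaviour of $h$ along a complete geodesic. The only difference is that you reprove inline what the paper isolates as Proposition \ref{nondependenceontehfiber}.
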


Corollary \ref{nondependenceontehfiberriem} was considered also in \cite{sousa} with a different approach. It shows that,  for complete Ricci solitons on semi-Riemannian warped product,  
the potential function depends only on the base.

Our next result  characterizes  Ricci almost solitons i.e., equation  (\ref{eqriccisoliton}),  on warped products,  when the potential function depends only on the base. 

\begin{theorem}\label{withoutdependence}
	Let $B^n\times_h F^m$ be a non trivial warped product where $(B^n,g_B)$ or 
	$(F^m,g_{F})$ can be either a Riemannian or a semi-Riemannian manifold. Then $(B^n\times_{h}F^m,g,f,\lambda)$ is a Ricci almost soliton, with $f$ constant on $F$ if, and only if,
	\begin{equation}\label{system_withoutdependence}
	\left\{ 
	\begin{array}[pos]{lll}
	Ric_{B}+\nabla_{B}\nabla_{B}f-mh^{-1}\nabla_{B}\nabla_{B}h=\lambda g_{B},\\\noalign{\smallskip}
	\lambda h^{2}=h(\nabla_{B}h)f-(m-1)|\nabla_{B}h|^{2}-h\Delta_{B}h+c(m-1),\\\noalign{\smallskip}
	Ric_{F}=c(m-1)g_{F},
	\end{array}
	\right.
	\end{equation}
	for some constant $c\in\mathbb{R}$.
\end{theorem}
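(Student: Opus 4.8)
The plan is to substitute the classical warped product expressions for the Ricci tensor and for the Hessian into the fundamental equation (\ref{eqriccisoliton}) and then read off its components along the three natural types of pairs of vector fields: both horizontal (lifted from $B$), both vertical (lifted from $F$), and mixed. Writing $g = g_B + h^2 g_F$ and taking $X,Y$ lifted from $B$ and $V,W$ lifted from $F$, I would use the standard identities recorded in Section~3,
\begin{align*}
Ric(X,Y) &= Ric_B(X,Y) - \frac{m}{h}\,\nabla_B\nabla_B h(X,Y), \\
Ric(X,V) &= 0, \\
Ric(V,W) &= Ric_F(V,W) - \left(\frac{\Delta_B h}{h} + (m-1)\frac{|\nabla_B h|^2}{h^2}\right) g(V,W).
\end{align*}
Since $f$ is constant on $F$, its Hessian has the analogous block form $\nabla\nabla f(X,Y) = \nabla_B\nabla_B f(X,Y)$, $\nabla\nabla f(X,V) = 0$, and $\nabla\nabla f(V,W) = h^{-1}(\nabla_B h)f\,g(V,W)$, the last coming from the O'Neill formula $\nabla_V W = \nabla^F_V W - h^{-1}g(V,W)\nabla_B h$ for vertical fields, together with the fact that $f$ is annihilated by vertical directions.

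For the direct implication, the horizontal-horizontal component of (\ref{eqriccisoliton}) reads $Ric_B + \nabla_B\nabla_B f - m h^{-1}\nabla_B\nabla_B h = \lambda g_B$, which is exactly the first equation of (\ref{system_withoutdependence}). Since every term on the left is a tensor on $B$, this identity already forces $\lambda$ to be constant along each fiber, i.e.\ to depend only on $B$. The mixed component is automatically satisfied, all three of its terms being zero.

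The vertical-vertical component, after dividing out $g(V,W) = h^2 g_F(V,W)$, becomes
\[
Ric_F = \left[\lambda h^2 - h(\nabla_B h)f + h\Delta_B h + (m-1)|\nabla_B h|^2\right] g_F .
\]
Call the bracket $\Psi$. Since $Ric_F$ and $g_F$ are tensors on $F$ and $g_F$ is nondegenerate, evaluating at a point where $g_F(V,W)\neq 0$ shows that $\Psi$ does not depend on the base point. Combined with the previous step, where $\lambda$ (and hence the whole expression) was seen to depend only on $B$, it follows that $\Psi$ is in fact constant. Writing this constant as $c(m-1)$ yields simultaneously the third equation $Ric_F = c(m-1)g_F$ and, after solving for $\lambda h^2$, the second equation of (\ref{system_withoutdependence}).

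I expect the only genuine subtlety to be this constancy argument for the fiber Einstein factor: one is tempted to invoke Schur's lemma, but that would require $\dim F = m \ge 3$, whereas the cross-consistency between the horizontal and vertical components, namely that $\lambda$ lives on $B$ while $\Psi$ lives on $F$, gives constancy directly and for every $m$. The converse is then routine: assuming (\ref{system_withoutdependence}) and taking $\lambda$ to be the function determined by the second equation, I would reverse each of the three computations above to recover the horizontal, mixed, and vertical blocks of (\ref{eqriccisoliton}), using the third equation to close the vertical block.
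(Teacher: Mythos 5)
Your proposal is correct and follows essentially the same route as the paper: the paper's Proposition 4.1 is precisely the substitution of the warped-product Ricci and Hessian formulas into the fundamental equation, which with $f$ constant on $F$ reduces to your three block equations, and the paper obtains the constant $c(m-1)$ by exactly your separation-of-variables argument (left side a function on $F$, right side a function on $B$), likewise avoiding any appeal to Schur's lemma.
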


The Riemannian version of Theorem \ref{withoutdependence} was also considered in \cite{feitosa2}, where the authors gave some explicit solutions to the system. The essence of both Theorems \ref{withdependence} and \ref{withoutdependence} is to express the condition for a warped product to be a Ricci almost soliton in terms of conditions on the base and on the fiber. Note that the first and third equations in Theorem \ref{withdependence} say that the corresponding gradient fields are conformal vector fields (see Appendix for definitions). In addition, the fourth equation of Theorem \ref{withdependence} and the third equation of Theorem \ref{withoutdependence} show that the fiber is an Eisntein manifold in both cases. 

We say that a semi-Riemannian manifold $(M,g)$ is a \textit{Brinkmann space} if it admits a parallel light like  vector field $X$, called a \textit{Brinkmann field}. These spaces play an important role in General Relativity \cite{brinkmann} and they were introduced by Brinkmann \cite{brinkmann} when the author studied conformal transformations between Einstein manifolds. 

We say that a vector field $X$ is \textit{improper} if there is an open set where $X$ is light like. If there is no such an open set then $X$ is called a \textit{proper} vector field. Our next two results show the rigidity of a Ricci almost soliton on a warped product 
$(B^n\times_{h}F^m,g,f,\lambda)$, 
when the potential function depends on the fiber. Namely, we show that such a Ricci almost soliton is either a Ricci soliton (i.e. $\lambda$ is constant) or $\lambda$ is not constant but $(B\times_h F,g)$ is an Einstein manifold.

  Theorems \ref{impropercase} and \ref{propercase} consider respectively  the case when $\nabla_Bh$ is an improper vector field and a proper one and the potential function of the warped product depends  on the fiber.

\begin{theorem}\label{impropercase}
	Let $B^n\times_h F^m$, $n\geq 2$, be a non trivial warped product where the base $(B^n,g_B)$ is a semi-Riemannian manifold and the fiber 	$(F^m,g_{F})$ can be either a Riemannian or a semi-Riemannian manifold. Then $(B^n\times_{h}F^m,g,f,\lambda)$ is a Ricci almost soliton, with $f$ non constant on $F$ and $\nabla_{B}h$ an improper vector field on $B$ if, and only if, $\lambda$ is constant, i.e. it is a Ricci soliton and $f=\beta +h\varphi$, where $\varphi:F\rightarrow \mathbb{R}$ non constant and $\beta:B\rightarrow\mathbb{R}$ are smooth functions satisfying
	\begin{equation*}
	g(\nabla_{B}h,\nabla_{B}\beta)=\lambda h+b, \qquad Ric_{B}+\nabla_{B}\nabla_{B}\beta=\lambda g_{B}, \qquad \nabla_{F}\nabla_{F}\varphi+bg_{F}=0
	\end{equation*}
for a constant $b\in \mathbb{R}$, $B$ is a Brinkmann space with $\nabla_{B}h$ as a Brinkmann field
	and $F$ is Ricci flat. 
	
	If in addition $F$ is complete, then it is isometric to 
	\begin{enumerate}
		\item $\pm\mathbb{R}\times\bar{F}^{m-1}$, where $\bar{F}$ is Ricci flat, if $b=0$;
		\item $\mathbb{R}^{m}_{\epsilon}$, if $b\neq0$.
	\end{enumerate}
\end{theorem}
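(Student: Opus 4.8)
The plan is to derive everything from Theorem~\ref{withdependence}, whose hypotheses are exactly those of the present statement together with the improper condition on $\nabla_{B}h$. First I would apply Theorem~\ref{withdependence} to write $f=\beta+h\varphi$ with $\varphi$ non-constant and to obtain the system (\ref{system_withdependence}), the formula (\ref{lambda}) for $\lambda$, and the constant-energy relation (\ref{relation}), for some constants $a,b,c$. The whole theorem then reduces to showing that the improper hypothesis forces $a=c=0$, and conversely that the Brinkmann/Ricci-flat data correspond precisely to this choice of constants.

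To see that $a=c=0$, let $U\subseteq B$ be an open set on which $\nabla_{B}h$ is lightlike, so $|\nabla_{B}h|^{2}\equiv 0$ and $\nabla_{B}h\neq 0$ on $U$. Differentiating $|\nabla_{B}h|^{2}$ and using the first equation of (\ref{system_withdependence}) gives $\nabla_{B}(|\nabla_{B}h|^{2})=-2ah\,\nabla_{B}h$ on all of $B$; since $|\nabla_{B}h|^{2}$ vanishes identically on $U$ its gradient vanishes there, whence $ah\,\nabla_{B}h=0$ and therefore $ah=0$ on $U$. If $a\neq 0$ this forces $h\equiv 0$, hence $\nabla_{B}h\equiv 0$, on $U$, contradicting that $\nabla_{B}h$ is lightlike (nonzero); thus $a=0$. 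Then (\ref{relation}) reads $|\nabla_{B}h|^{2}=c$ globally, and evaluating on $U$ gives $c=0$. Substituting $a=c=0$ into (\ref{system_withdependence}) and (\ref{lambda}) yields exactly the three displayed equations, together with $\lambda=((\nabla_{B}h)\beta-b)h^{-1}$ and $Ric_{F}=0$ (so $F$ is Ricci flat); moreover the first equation becomes $\nabla_{B}\nabla_{B}h=0$, so $\nabla_{B}h$ is parallel, while $|\nabla_{B}h|^{2}\equiv 0$ makes it lightlike. Since $h$ is non-constant, $\nabla_{B}h$ is a nowhere-vanishing parallel lightlike field, i.e. a Brinkmann field, so $B$ is a Brinkmann space (this is where $n\ge 2$ enters, ensuring $g_{B}$ is indefinite). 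The converse is routine: given the Brinkmann base (so $\nabla_{B}\nabla_{B}h=0$ and $|\nabla_{B}h|^{2}=0$) and Ricci-flat fiber, the choice $a=c=0$ satisfies all four equations of (\ref{system_withdependence}) and relation (\ref{relation}), so Theorem~\ref{withdependence} returns a Ricci almost soliton, with $\nabla_{B}h$ lightlike and hence improper.

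For the classification of a complete fiber I would analyze the third equation $\nabla_{F}\nabla_{F}\varphi+b\,g_{F}=0$ with $\varphi$ non-constant on the complete Ricci-flat manifold $F$. When $b=0$ the field $V=\nabla_{F}\varphi$ is parallel and non-trivial, hence of constant (and, in this case, non-null) norm; the parallel line field it spans and its parallel orthogonal complement integrate to a global metric splitting $F\cong\pm\mathbb{R}\times\bar F$ by a de~Rham-type argument valid for complete semi-Riemannian manifolds, the sign recording the causal character of $V$, and $\bar F$ inherits Ricci flatness from $F$. When $b\neq 0$ the equation says $\nabla_{F}\varphi$ is a non-parallel concircular gradient with constant factor $-b$; along any unit-speed geodesic $\varphi$ is then a genuine quadratic, and the classical Tashiro-type classification of complete (semi-)Riemannian manifolds admitting such a special concircular function identifies $F$ with the pseudo-Euclidean space $\mathbb{R}^{m}_{\epsilon}$, consistent with $Ric_{F}=0$.

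The short deduction $a=c=0$ is the conceptual heart, but I expect the main obstacle to be the last paragraph: transporting the de~Rham splitting and the Tashiro classification into the semi-Riemannian category, where one must control the causal character of the parallel direction and the behaviour of the flow of $\nabla_{F}\varphi$ near the zeros of $V$. These points are handled by invoking the structure theorems for complete semi-Riemannian manifolds recalled in Section~3; the remainder of the argument is the bookkeeping of substituting $a=c=0$ into Theorem~\ref{withdependence}.
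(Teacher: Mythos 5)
Your route is the paper's route: everything is funneled through Theorem~\ref{withdependence}, the improper hypothesis forces $a=c=0$, substitution gives the displayed equations plus the Brinkmann/Ricci-flat conclusions, and the complete fiber is handled by the classification theorems of Section~3 (Theorem~\ref{homothetic} for $b\neq 0$, Theorem~\ref{conformal2} for $b=0$). One remark on the key step: you do not need the detour through $\nabla_B(|\nabla_Bh|^2)=-2ah\,\nabla_Bh$. The paper reads $a=c=0$ directly off relation~(\ref{relation}): on the open set where $\nabla_Bh$ is lightlike one has $ah^2=c$, so $a\neq0$ would make $h$ locally constant and hence $\nabla_Bh=0$ there, contradicting lightlikeness; then $c=0$ follows. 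Your argument is correct but proves the same thing with one more derivative.

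The one genuine soft spot is the parenthetical ``(and, in this case, non-null)'' in your treatment of $b=0$. A nontrivial parallel gradient on a semi-Riemannian manifold can perfectly well be lightlike (take a null linear function on $\mathbb{R}^m_1$), and then the de~Rham-type splitting you invoke fails: the span of $V$ is contained in its own orthogonal complement and one only concludes that $F$ is a Brinkmann space, not a nondegenerate product $\pm\mathbb{R}\times\bar F$. This is exactly the dichotomy recorded in Theorem~\ref{conformal2}, whose improper alternative (for $m\geq3$) is not listed in item~1 of Theorem~\ref{impropercase}; the paper's own proof glosses over this by citing Theorem~\ref{conformal2} wholesale, so the looseness is shared with the source, but as written your proof of item~1 is incomplete: you must either justify why $\nabla_F\varphi$ is proper on $F$ (which does not follow from the hypotheses) or admit the Brinkmann fiber as an additional possibility. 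The $b\neq0$ case is fine, since Theorem~\ref{homothetic} requires no properness assumption.
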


The vector field $\nabla_{B}h$ is \textit{homothetic} if its local flow acts by translations. Otherwise it is called \textit{non-homothetic}. 

\begin{theorem}\label{propercase}
	Let $B^n\times_h F^m$ be a non trivial warped product where the base $(B^n,g_B)$ or the fiber $(F^m,g_{F})$ can be either a Riemannian or a semi-Riemannian manifold and suppose that $(B^n\times_{h}F^m,g,f,\lambda)$ is a Ricci almost soliton with $f$ non constant on $F$ and $\nabla_{B}h$ a proper vector field. Then

i) If 	$\nabla_B h$ is homothetic, then $\lambda$ is constant, i.e, it is a Ricci soliton.

ii) If  $\nabla_B h$ is non-homothetic,   then $\lambda$ is not constant,    
	 $B^n\times_h F^m$, $B$ and $F$ are Einstein manifolds such that 
 \begin{equation}\label{RicWarpBF}
Ric_{B\times_hF}=a(n+m-1)g, \qquad 
Ric_B=a(n-1)g_B,\qquad \qquad Ric_F=c(m-1)g_F, 
\end{equation}
where the constants $a\neq 0$ and $c$ are related to $h$ by $|\nabla_B h|^2+ah^2=c$. Moreover,  
 $\nabla f$ and $\nabla_{B}h$ are conformal gradient fields on $B^n\times_h F^m$ and on $B^{n}$, respectively, satisfying
 \begin{equation}\label{conformalfh}
	\nabla\nabla f+(af+a_{0})g=0,\qquad \qquad 
	\nabla_{B}\nabla_{B} h+ahg_{B}=0.
\end{equation}
and  
\begin{equation}\label{lambdaT26}
	\lambda=-af+a(m+n-1)-a_{0}, 
\end{equation}
for some constant $a_{0}\in\mathbb{R}$. 
\end{theorem}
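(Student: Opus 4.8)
The plan is to feed the hypotheses into Theorem \ref{withdependence} and then upgrade the two concircular equations it produces into full Einstein conditions. First I would note that, since $h$ is non constant, the first equation of (\ref{system_withdependence}), namely $\nabla_B\nabla_B h=-ahg_B$, shows that $\nabla_B h$ is parallel (its local flow acts by translations) precisely when $ah\equiv 0$, i.e. when $a=0$; hence the non homothetic hypothesis is equivalent to $a\neq 0$, the inequality asserted in the statement. Theorem \ref{withdependence} then supplies $f=\beta+h\varphi$, the constants $a,b,c$, the system (\ref{system_withdependence}), the relation (\ref{relation}), and the expression (\ref{lambda}) for $\lambda$. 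In particular the fourth equation already gives $Ric_F=c(m-1)g_F$, so the entire burden is to prove that $B$, and then $B\times_hF$, is Einstein and to produce the constant $a_0$.

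Next I would extract a first integral. Taking the divergence of $\nabla_B\nabla_B h=-ahg_B$ (a standard Bochner computation) gives $Ric_B(\nabla_B h)=(n-1)a\,\nabla_B h$, so $\nabla_B h$ is an eigenvector of $Ric_B$ with eigenvalue $(n-1)a$. Combining this with the second equation of (\ref{system_withdependence}) to evaluate $\nabla_B\nabla_B\beta(\cdot,\nabla_B h)$, a direct computation of the differential of the scalar $Q:=g_B(\nabla_B h,\nabla_B\beta)+ah\beta$ collapses to $dQ=h^{-1}(Q-b)\,dh$, whence $d((Q-b)/h)=0$ and therefore $Q-b=kh$ for a constant $k$. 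Setting $a_0:=-k$, this is the identity $g_B(\nabla_B h,\nabla_B\beta)+ah\beta-b+a_0h=0$; substituting it into (\ref{lambda}) immediately yields $\lambda=-af+a(m+n-1)-a_0$, which is (\ref{lambdaT26}).

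The crux is proving $Ric_B=(n-1)a\,g_B$, and this is where $a\neq 0$ is essential. On the open set where $\nabla_B h\neq 0$ the relation (\ref{relation}) forces $|\nabla_B h|^2=c-ah^2\neq 0$, so $\nabla_B h$ is non null (this also shows that $a\neq 0$ excludes the improper case of Theorem \ref{impropercase}); by the classical structure theorem for concircular functions, the equation $\nabla_B\nabla_B h=-ahg_B$ makes $(B,g_B)$ locally a warped product $I\times_w N$ over an interval, with warping function $w$ proportional to $h'$ and $h=h(t)$. The mixed components of the second equation of (\ref{system_withdependence}) then read $V(\beta_t-\frac{w'}{w}\beta)=0$ for $V$ tangent to $N$, forcing $\beta=A(t)+w(t)B(x)$; inserting this into the $N$ components of the same equation, and using the values of $Ric_B$ and $\nabla_B\nabla_B h$ already computed in these coordinates, one obtains an identity of the form $Ric_N+w(t)\,K=\Phi(t)g_N$, where $K:=\mathrm{Hess}_N B+\varepsilon ac\,B\,g_N$ is a fixed tensor on $N$. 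Because $a\neq 0$ makes $w=h'$ non constant, evaluating at two values $t_1,t_2$ with $w(t_1)\neq w(t_2)$ separates the variables and gives $K=\kappa g_N$ and $Ric_N=\nu g_N$; hence $N$ is Einstein, $Ric_B=(n-1)a\,g_B$ on the open set, and by continuity on all of $B$. I expect this separation of variables — together with checking that the critical set of $h$ is negligible — to be the main obstacle; note that if $a=0$ then $w$ is constant and the argument breaks down, which is exactly why the conclusion may fail in the homothetic case.

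Finally I would assemble the global statements. With $Ric_B=(n-1)a\,g_B$, $Ric_F=c(m-1)g_F$ and $|\nabla_B h|^2+ah^2=c$ in hand, O'Neill's warped-product Ricci formulas give, on vertical vectors, $Ric(V,W)=[(m-1)c-(h\Delta_B h+(m-1)|\nabla_B h|^2)]g_F=(n+m-1)a\,g(V,W)$ after substituting $\Delta_B h=-nah$ and (\ref{relation}); on horizontal vectors $Ric=Ric_B+ma\,g_B=(n+m-1)a\,g_B$; and the mixed part vanishes. Thus $B\times_hF$, $B$ and $F$ are Einstein as in (\ref{RicWarpBF}). The second identity in (\ref{conformalfh}) is just the first equation of (\ref{system_withdependence}); for the first, I would compute the Hessian of $f=\beta+h\varphi$ on the warped product slot by slot — the mixed part is zero, the vertical part reduces via the third equation of (\ref{system_withdependence}) and (\ref{relation}) to $-(af+a_0)g$, and the horizontal part reduces via the second equation and the now-established Einstein condition to $-(af+a_0)g$ — giving $\nabla\nabla f+(af+a_0)g=0$ and closing the proof.
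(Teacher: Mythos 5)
Your route parallels the paper's overall strategy (reduce everything to Theorem \ref{withdependence}, observe that the non homothetic hypothesis forces $a\neq0$, use the local concircular structure $D\cong I\times_{h'}N$ of Proposition \ref{localcharacterization}, separate variables on the level hypersurfaces, and assemble via Proposition \ref{einsteincondition}), and two of your steps are genuinely nicer: the first integral $d\bigl((Q-b)/h\bigr)=0$ with $Q=g_B(\nabla_Bh,\nabla_B\beta)+ah\beta$ yields (\ref{lambdaT26}) directly and independently of the Einstein condition, whereas the paper obtains it only at the end via Proposition \ref{otherprove}; and your single separation-of-variables identity replaces the paper's case split between a recursive application of Theorem \ref{withdependence} (when $\beta$ depends on $N$) and of Theorem \ref{withoutdependence} (when it does not).

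There is, however, a genuine gap at exactly the step you flag as the crux. From $Ric_N+w(t)K=\Phi(t)g_N$ with $w$ non constant you correctly deduce $K=\kappa g_N$ and $Ric_N=\nu g_N$, but the conclusion ``hence $Ric_B=(n-1)a\,g_B$'' does not follow. Writing $g_D=\epsilon\,dt^2+w^2g_N$, the fiber components of $Ric_B$ are $Ric_N-\epsilon[ww''+(n-2)(w')^2]g_N$; using $w''=-\epsilon aw$ and (\ref{relation}), this equals $(n-1)a\,g_B$ on fiber vectors if and only if $\nu=(n-2)\epsilon ac$ (up to the normalization of $w$). Knowing only that $N$ is Einstein with \emph{some} constant $\nu$ is not enough. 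When $\beta$ genuinely depends on $N$ and $n\geq3$, the value of $\nu$ comes for free, because $K=\kappa g_N$ is then a concircular equation for $B(x)$ on the Einstein manifold $N$ and Bochner (Proposition \ref{otherprove}) forces $\nu/(n-2)=\epsilon ac$. But when $\beta$ is constant on the level sets, $K$ is already a multiple of $g_N$ and carries no information, and your argument leaves $\nu$ undetermined. This is precisely the case on which the paper spends equations (\ref{eqv})--(\ref{intermediateeinstein}), combining the $\partial_t\partial_t$-component of the base equation with the fiber component and (\ref{relation}) to get $a(\bar c\mp ac)=0$ and hence $\bar c=\pm ac$ since $a\neq0$; you need to supply this computation (or an equivalent one) to close the proof. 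Two minor points: $|\nabla_Bh|^2=c-ah^2$ can vanish on the level set $h=\sqrt{c/a}$ even where $dh\neq0$, so the non-null locus is only dense in the regular set (harmless by continuity, but your claim as stated is too strong); and the case $n=1$, allowed by the theorem, is not covered by the concircular-structure argument and must be handled separately via Corollary \ref{einsteinonedimentional}.
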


A direct corollary of both Theorem \ref{impropercase} and Theorem \ref{propercase} is the following rigidity result.   
Other  rigidity results  can be found  in  \cite{barros2}, \cite{barros3}, \cite{grosh2} or \cite{pigola}.

\begin{corollary}\label{rigidityresult}
	If $(B^n\times_{h}F^m,g,f,\lambda)$ is a warped product Ricci almost soliton,  with $f$ non constant on $F$, then one of the following holds
	
	i) $\lambda$ is constant, i.e., it is a Ricci soliton;  
    
    ii)	$\lambda$ is not constant, $(B^n\times_{h}F^m,g)$, $(B,g_B)$ and $(F,g_F)$  are  Einstein manifolds,  $\nabla_{B}h$ is a proper and non-homothetic conformal vector field,     and $\nabla f$, $\nabla_{F} \varphi$ are conformal.
\end{corollary}

Observe that Corollaries \ref{rigidityresult} and \ref{nondependenceontehfiberriem} imply   
that a complete Ricci almost soliton $(B^n\times_{h}F^m,g,f,\lambda)$ on a warped product,   with $f$ non constant on the fiber $F$, will have  necessarily non constant $\lambda$.   
We conclude this section  with a classification result for such Ricci almost solitons.  In order to do so, we consider the following classes of $n$-dimensional complete Einstein manifolds (see Theorems \ref{conformal2}-\ref{conformal3new} in the Appendix):\\

\vspace{.05in}

\noindent{\bf Class I}
\begin{enumerate}
\item  $\mathbb{R}\times N^{n-1}$ where $(N,g_N)$ is a complete Riemannian or semi-Riemannian Einstein manifold.
\item A Brinkman space of dimension $n\geq 3$, i.e. a semi-Riemannian manifold $(M^n,g)$ admitting a parallel light like vector field.
\end{enumerate}  
{\bf Class II} 
\begin{enumerate}
\item $\mathbb{S}^{n}_{\varepsilon}(1/\sqrt{c})$,  when $0\leq\varepsilon\leq n-2$; the covering of $\mathbb{S}^{n}_{n-1}(1/\sqrt{c})$ when $\varepsilon=n-1$ and  the upper part of $\mathbb{S}^{n}_{n}(1/\sqrt{c})$ when $\varepsilon=n$ with $c>0$.
\item  $\mathbb{H}^{n}_{\varepsilon}(1/\sqrt{|c|})$, when  $2\leq\varepsilon\leq n-1$;  the  covering of $\mathbb{H}^{n}_{1}(1/\sqrt{|c|})$ when $\varepsilon=1$ and the upper part of $\mathbb{H}^{n}_{0}(1/\sqrt{|c|})$ when $\varepsilon=0\,$ , with $c<0$.  				
\item $(\mathbb{R}\times N^{n-1},\pm dt^{2}+\cosh^{2}(\sqrt{|c|}\,t)g_{N})$, where  $(N^{n-1},g_{N})$ is a RIemannian or semi-Riemannian Einstein manifold.
\item $(\mathbb{R}\times N^{n-1},\pm dt^{2}\pm e^{2\sqrt{|c|}\,t}g_{N})$, where  $(N^{n-1},g_{N})$ is a Riemannian Einstein manifold, 
\end{enumerate}
	
The following result classifies the  complete Ricci almost solitons on warped products, whose potential functions depend on the fiber. It also shows that $\lambda$ depends on the fibre. 	
	
\begin{theorem}\label{coroclassificationnew}
	Let $M^{n+m}=B^n\times_h F^m$ be a non trivial warped product where $(B^n,g_B)$ or $(F^m,g_{F})$ can be either a Riemannian or a semi-Riemannian manifold. Then $(B^n\times_{h}F^m,g,f,\lambda)$ is a complete Ricci almost soliton with $f$ non constant on $F$ if, and only if, there exist  constants $a\neq 0,\	
	a_0, c \in\mathbb{R}$ such that  $f=a^{-1}(-\lambda+a(m+n-1)-a_{0})$ and 
\begin{enumerate}
\item if $n=1$ then $B^1$ is isometric to  $(\mathbb{R},sgn\, a \,dt^2)$  
\begin{equation}\label{hc}
h=\left\{ 
\begin{array}{lll}
Ae^{\sqrt{|a|}t} & \mbox{ if } & c=0,\\ \noalign{\smallskip}
\sqrt{|\frac{c}{a}|}[ \cosh(\sqrt{|a|}t+ B)] & \mbox{ if } & c\neq 0,
\end{array} 
 \right.
 \end{equation}
where $A\neq 0$ and $B\in \mathbb{R}$. Moreover, 
$M$ is an Einstein manifold satisfying $Ric_{M}=(m+n-1) ag$ and if $m\geq 2$, F is an Einstein manifold satisfying $Ric_F=(m-1)cg_F$.
\item If $n\geq 2$ and $m\geq 2$ then 
\begin{itemize}
\item $M^{n+m}$ is an Eisntein manifold isometric either to a manifold of Class II.1 (resp. II.2) when $a>0$ (resp. $a<0$) and $f$ has some critical point  or it is isometric to a manifold of Class II.3 or II.4 if $f$ has no critical points. 
\item  $B$ is a complete Einstein manifold isometric either to a manifold of 
Class II.1 (resp. Class II.2) and index $\varepsilon_B=n$ (resp. $\varepsilon_B=1$)  if $a>0$ (resp. $a<0$) and $h$ has critical points   
or to a manifold of Class II.3 or II.4 if $h$ has no critical points.
\item $F$ is a complete Einstein manifold isometric to either $\mathbb{R}^n_\varepsilon$, or to a manifolds of Class I when  $c=0$ and it is isometric to a manifold of Class II when $c\neq 0$.  
\end{itemize}
\item Moreover, $F^{m}$, $m\geq 1$ is positive definite (resp. negative definite) if $B^n$, $n\geq 1$ is positive definite (resp. negative definite). 
\end{enumerate}
\end{theorem}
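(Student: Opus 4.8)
The plan is to reduce the classification to the rigidity already established in Theorem \ref{propercase} together with the structure theorems \ref{conformal2}--\ref{conformal3new} for complete semi-Riemannian Einstein manifolds carrying a concircular (conformal gradient) function, reading off the factors $B$, $F$ and the product $M$ one at a time. Throughout I would invoke Theorem \ref{withdependence} to write $f=\beta+h\varphi$ and to fix the constants $a,b,c$ subject to $|\nabla_B h|^2+ah^2=c$, and I would use the completeness lemmas of Section 3 to transfer completeness of $M$ to the base and, once $h$ is controlled, to the fiber.

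First I would rule out degenerate behaviour of $\nabla_B h$. For $n\ge 2$ the improper case is excluded by Corollary \ref{corolimproper}, while for $n=1$ a one-dimensional semi-Riemannian metric is never light like, so $\nabla_B h$ is automatically proper. I would then show $a\ne 0$: if $a=0$ the first equation of (\ref{system_withdependence}) gives $\nabla_B\nabla_B h=0$, so $\nabla_B h$ is parallel with $|\nabla_B h|^2=c$ constant; if $c=0$ it is everywhere light like (impossible by the previous step, or by $h$ nonconstant when $n=1$), and if $c\ne 0$ the integral curves of the parallel field $\nabla_B h$ are complete geodesics along which $h$ is a nonconstant affine function, forcing $h$ to vanish and contradicting $h\ne 0$. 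Hence $a\ne 0$, and since $\nabla_B\nabla_B h=-ahg_B$ is not a constant multiple of $g_B$, the field $\nabla_B h$ is non homothetic. Theorem \ref{propercase} then applies and yields (\ref{RicWarpBF}), the two conformal equations (\ref{conformalfh}) and $\lambda=-af+a(m+n-1)-a_0$, i.e. $f=a^{-1}(-\lambda+a(m+n-1)-a_0)$, which is the asserted form of $f$ together with the existence of the constants $a\ne 0,\ a_0,\ c$.

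With the Einstein property of $B$, $F$ and $M$ in hand, the remaining work is to feed each conformal gradient field into the structure theorems. For the product, $\tilde f=f+a_0/a$ satisfies $\nabla\nabla\tilde f+a\tilde f\,g=0$ on the complete Einstein manifold $M$, and the dichotomy ``$f$ has a critical point / $f$ has none'' sends $M$ to Class II.1 or II.2 (according to $\mathrm{sgn}\,a$) in the first case and to Class II.3 or II.4 in the second. The same argument applied to $h$ on $B$ (with $Ric_B=a(n-1)g_B$) gives the base alternatives, the precise index $\varepsilon_B=n$ or $\varepsilon_B=1$ being forced by matching signs in the constant energy relation $|\nabla_B h|^2+ah^2=c$. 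For the fiber I would set $\tilde\varphi=\varphi+b/c$ when $c\ne 0$ to obtain $\nabla_F\nabla_F\tilde\varphi+c\tilde\varphi\,g_F=0$ and land in Class II; when $c=0$ the fiber is Ricci flat and $\nabla_F\nabla_F\varphi=-b\,g_F$ gives $\mathbb{R}^m_\varepsilon$ if $b\ne 0$ and a splitting into Class I if $b=0$, exactly as in Theorem \ref{impropercase}. The case $n=1$ is separate but easier: $B=(\mathbb{R},\mathrm{sgn}\,a\,dt^2)$ by completeness, the equation $\nabla_B\nabla_B h+ahg_B=0$ becomes the ODE $h''+ah=0$ whose solutions subject to $(h')^2\,\mathrm{sgn}\,a+ah^2=c$ are precisely (\ref{hc}), and (\ref{RicWarpBF}) gives the stated Einstein curvatures. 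The converse is a direct verification: starting from the listed model spaces and the prescribed $f$, $h$, $\varphi$, one checks the four equations of (\ref{system_withdependence}) and formula (\ref{lambda}), so Theorem \ref{withdependence} certifies that $M$ is a Ricci almost soliton, complete by construction.

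The main obstacle I anticipate is not the conceptual skeleton but the sign and index bookkeeping of Step~3: correctly pairing $a>0$ with the Class II.1 models and $a<0$ with the Class II.2 models, pinning the exact value of the index $\varepsilon$ from the constant energy relation and the Einstein constants, and finally deriving the definiteness correspondence of part~(3) --- that $F$ is positive (resp. negative) definite exactly when $B$ is --- from the additivity of the index across the warped decomposition together with the sign constraints coming from $c=|\nabla_B h|^2+ah^2$ and $Ric_F=(m-1)cg_F$. Ensuring that completeness of $M$ genuinely passes to $F$, which requires controlling the warping function along the models produced for $B$, is the other delicate point.
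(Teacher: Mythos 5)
Your proposal follows essentially the same route as the paper: Theorem \ref{withdependence} to decompose $f=\beta+h\varphi$, Proposition \ref{nondependenceontehfiber} (which you essentially reprove inline via the affine behaviour of $h$ along integral curves of a parallel field) to force $a\neq 0$ and hence non-homotheticity, then Theorem \ref{propercase} followed by Theorems \ref{homothetic}, \ref{conformal2} and \ref{conformal3new} applied separately to $\tilde f$, $h$ and $\varphi$. Two of the points you flag as ``delicate'' are, however, resolved in the paper by mechanisms different from the ones you gesture at. First, the index $\varepsilon_B=n$ (resp.\ $\varepsilon_B=1$) is not extracted from sign-matching in the constant energy relation $|\nabla_Bh|^2+ah^2=c$; it comes from the fact that $h$, being a warping function, is a nowhere-vanishing height function on the hyperquadric, and Proposition \ref{zeros12} shows that only $\mathbb{S}^n_n$ (resp.\ $\mathbb{H}^n_1$) admits such height functions. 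Second, the definiteness correspondence of part~(3) is not obtained from additivity of the index together with the Einstein constants --- index additivity alone cannot force $F$ to be definite --- but from the completeness hypothesis on $M$ and the Been--Busemann obstruction: an indefinite fiber over a definite base would reproduce the incompleteness phenomenon of $(\mathbb{R}^2, dx^2-e^{2x}dy^2)$. With those two substitutions your argument coincides with the paper's; the remainder (the $n=1$ ODE analysis, the treatment of the fiber according to $c=0$, $b\neq 0$, etc., and the converse by direct verification of (\ref{system_withdependence}) and (\ref{lambda})) is as in the paper.
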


\noindent {\bf Remarks:} 
\begin{enumerate}
\item As we will see in the next sections, the proofs of our main results 
rely strongly on an important decomposition property of the potential function, namely 
$f=\beta+h\varphi$, where $\beta$ and $h$ are defined on the base and $\varphi$ is defined on the fiber $F$ (see Proposition \ref{decompose}). By considering this decomposition, in Theorem  \ref{coroclassificationnew} item 2, when $c\neq 0$, the fiber $F$ is isometric to a manifold of Class II 1 (resp. Class II 2) when $c>0$ (resp. $c<0$) and $\varphi$ has some critical point, 
while $F$ is isometric to a manifold of Class II 3 or 4 when $\varphi$ has no critical points
(see proof of Theorem \ref{coroclassificationnew}). 
\item 
We observe that, when we are in the Riemannian setting, Theorem \ref{impropercase} does not occur. Moreover, Class I only contains the product of $\mathbb{R}\times N^{n-1}$, where $(N,g_N)$ is a complete Riemannian Einstein manifold and Class II is restricted to the Riemannian manifolds. The classification in the Riemannian case was first obtained in 
\cite{pigola}.   
\end{enumerate}


\section{Preliminaries}

In this section we recall some definitions and results that will be used in 
Section 4, for  the proofs of the main results.

\subsection{Warped Products}

Consider two semi-Riemannian manifolds $(B^{n},g_{B})$ and $(F^{m},g_{F})$. Given a smooth function $h:B\rightarrow(0,+\infty)$, we can consider the warped product $B\times_{h}F$, see \cite{bishop} or \cite{oneill}, with warping function $h$, as the product manifold $B\times F$ endowed with the metric $g=g_{B}+h^{2}g_{F}$, defined by
\begin{equation}
g=\pi^{*}g_{B}+(h\circ\pi)^{2}\sigma^{*}g_{F},
\end{equation}
where $\pi:B\times F\rightarrow B$ and $\sigma:B\times F\rightarrow F$ are the canonical projections. So $B\times_{h}F$ is a semi-Riemannian manifold of dimension $n+m$.

In what follows, we will consider on the product lifted vector fields from the base and from the fiber identifying these vector fields with the corresponding vector fields on the base and on the fiber, respectively. The set of all such liftings from the base will be denoted by $\mathfrak{L}(B)\subset\mathfrak{X}(B\times F)$ and the set of all liftings from the fiber will be denoted by $\mathfrak{L}(F)\subset\mathfrak{X}(B\times F)$. Vector fields lifted from the base will be denoted by $X, Y, Z\in\mathfrak{L}(B)$ and vector fields lifted from the fiber will be denoted by $U, V, W\in\mathfrak{L}(F)$. For more information about lifting vector fields see for example \cite{oneill}.

The propositions below can be found in \cite{bishop} or \cite{oneill}. They express the geometry of the warped product in terms of the base and fiber geometries and the properties of the warping function. They can be used to produce examples of metrics satisfying some prescribed properties, as one can see for example in \cite{bishop}.  


\begin{proposition}\cite{bishop}\label{prop_ricciwarped}
	Let $M= B^n\times_{h}F^m$ be a Riemannian or semi-Riemannian warped product. Then the Ricci tensor of $M$ is given by
	\begin{equation}\label{ricciwarped}
	\left\{
	\begin{array}[pos]{lll}
	Ric(X,Y)=Ric_{B}(X,Y)-mh^{-1}\nabla_{B}\nabla_{B}h(X,Y),\\\noalign{\smallskip}
	Ric(X,U)=0,\\\noalign{\smallskip}
	Ric(U,V)=Ric_{F}(U,V)-[h\Delta_{B}h+(m-1)|\nabla_{B}h|^{2}]g_{F}(U,V).
	\end{array}
	\right.
	\end{equation}
\end{proposition}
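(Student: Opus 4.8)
The plan is to obtain the three identities by computing the full Riemann curvature tensor of $g=g_{B}+h^{2}g_{F}$ from its Levi-Civita connection and then contracting over an adapted orthonormal frame. Since each assertion is tensorial, it suffices to evaluate both sides on lifted fields $X,Y,Z\in\mathfrak{L}(B)$ and $U,V,W\in\mathfrak{L}(F)$, for which the mixed Lie brackets vanish. First I would record the warped-product connection obtained from the Koszul formula (see \cite{oneill}): $\nabla_{X}Y$ is the lift of the base covariant derivative $\nabla^{B}_{X}Y$; $\nabla_{X}V=\nabla_{V}X=h^{-1}(Xh)\,V$; and $\nabla_{V}W=\nabla^{F}_{V}W-h\,g_{F}(V,W)\,\nabla_{B}h$, where the last equality uses $g(V,W)=h^{2}g_{F}(V,W)$. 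Iterating these, and fixing the curvature sign convention of Section 3, one finds that $R(X,Y)Z$ is the lift of the base curvature $R_{B}(X,Y)Z$; the purely mixed terms $R(X,Y)V$ and $R(U,V)X$ vanish; the terms $R(V,X)Y$ are proportional to $h^{-1}\nabla_{B}\nabla_{B}h(X,Y)\,V$ and $R(X,V)W$ to $g_{F}(V,W)\,\nabla_{X}(\nabla_{B}h)$; and for three vertical fields $R(U,V)W=R_{F}(U,V)W-h^{-2}|\nabla_{B}h|^{2}\big(g(V,W)U-g(U,W)V\big)$.

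Next I would set up the adapted frame. Let $\{X_{i}\}_{i=1}^{n}$ be a local $g_{B}$-orthonormal frame on $B$ with $\varepsilon_{i}=g_{B}(X_{i},X_{i})=\pm1$, and $\{U_{\alpha}\}_{\alpha=1}^{m}$ a local $g_{F}$-orthonormal frame on $F$ with $\delta_{\alpha}=g_{F}(U_{\alpha},U_{\alpha})=\pm1$. Then $\{X_{i},\,h^{-1}U_{\alpha}\}$ is $g$-orthonormal, since $g(h^{-1}U_{\alpha},h^{-1}U_{\beta})=h^{-2}h^{2}g_{F}(U_{\alpha},U_{\beta})=\delta_{\alpha}\delta_{\alpha\beta}$, and the Ricci tensor is computed as $Ric(E,E')=\sum_{i}\varepsilon_{i}\,g(R(X_{i},E)E',X_{i})+\sum_{\alpha}\delta_{\alpha}\,g(R(h^{-1}U_{\alpha},E)E',h^{-1}U_{\alpha})$. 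I would then treat the three cases separately. For $(X,Y)$ the base sum reproduces $Ric_{B}(X,Y)$, while the fiber sum, using tensoriality in the first slot and $\delta_{\alpha}^{2}=1$, contributes $m$ identical terms giving $-m\,h^{-1}\nabla_{B}\nabla_{B}h(X,Y)$. For $(X,U)$ both sums vanish, since the relevant curvature values are either zero or horizontal (hence $g$-orthogonal to the vertical contracting vector). For $(U,V)$ the base sum yields $-h\,\Delta_{B}h\,g_{F}(U,V)$ after writing $\sum_{i}\varepsilon_{i}\nabla_{B}\nabla_{B}h(X_{i},X_{i})=\Delta_{B}h$ and converting $g(U,V)=h^{2}g_{F}(U,V)$, and the fiber sum produces $Ric_{F}(U,V)$ from the $R_{F}$ piece (the $h$-scalings $h^{-2}\cdot h^{2}$ cancel) together with a $-(m-1)|\nabla_{B}h|^{2}g_{F}(U,V)$ correction, using the completeness relation $\sum_{\alpha}\delta_{\alpha}\,g(h^{-1}U_{\alpha},V)\,g(U,h^{-1}U_{\alpha})=g(U,V)$ and $\sum_{\alpha}\delta_{\alpha}^{2}=m$.

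The main obstacle I anticipate is the bookkeeping of the factors of $h$ relating $g$ and $g_{F}$ on the vertical directions, carried out consistently with the sign conventions for the curvature tensor, the Hessian $\nabla_{B}\nabla_{B}h$, and the Laplacian $\Delta_{B}h$, so that the coefficients $m$ and $(m-1)$ and every sign emerge exactly as stated. The most delicate point is the $(U,V)$ computation, where the vertical curvature correction $-(m-1)|\nabla_{B}h|^{2}$ and the horizontal Hessian trace $-h\,\Delta_{B}h$ must combine into the single bracket $\big[h\Delta_{B}h+(m-1)|\nabla_{B}h|^{2}\big]g_{F}(U,V)$; isolating this requires tracking which term of $g(V,W)U-g(U,W)V$ survives under the trace and verifying that the conformal factor $h^{2}$ multiplying $g_{F}$ in the vertical block cancels precisely against the $h^{-2}$ in the fiber curvature correction.
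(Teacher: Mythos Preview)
Your approach is the standard one and is correct: compute the Levi-Civita connection of the warped metric from Koszul's formula, derive the curvature tensor on lifted fields, and then trace against an adapted orthonormal frame $\{X_i,\,h^{-1}U_\alpha\}$. This is exactly how the result is established in the references \cite{bishop} and \cite{oneill}.

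Note, however, that the paper does \emph{not} supply its own proof of this proposition. It is stated as a background result with the sentence ``The propositions below can be found in \cite{bishop} or \cite{oneill},'' and no argument is given. So there is nothing to compare your proposal against within the paper itself; your outline simply reproduces the textbook derivation that the authors are citing.
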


The next result is a direct consequence of Proposition \ref{prop_ricciwarped} and it will be useful for the proofs of our main results.

\begin{proposition}\cite{kimkim}\label{einsteincondition}
A semi-Riemannian warped product, $B^{n}\times_{h}F^m$, is an Einstein space with Einstein constant $a\in\mathbb{R}$ if, and only if, there is a constant $c\in\mathbb{R}$ so that
	\begin{equation}
	\left\{
	\begin{array}[pos]{lll}
	Ric_{B}-mh^{-1}\nabla_{B}\nabla_{B}h=a(m+n-1)g_{B},\\\noalign{\smallskip}
	h\Delta_{B}h+(m-1)|\nabla_{B}h|^{2}+a(m+n-1)h^{2}=c(m-1),\\\noalign{\smallskip}
	Ric_{F}=c(m-1)g_{F}.
	\end{array}
	\right.
	\end{equation}
\end{proposition}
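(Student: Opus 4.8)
The plan is to read the Einstein condition off block by block from the curvature formulas of Proposition \ref{prop_ricciwarped} and then separate the resulting scalar identities into a base part and a fiber part. Recall that, with the convention used throughout, $M^{n+m}=B^n\times_h F^m$ is Einstein with Einstein constant $a$ precisely when $Ric=a(m+n-1)g$ (consistent with the normalization $Ric_{B\times_hF}=(n+m-1)ag$ of Theorem \ref{propercase}). Since every tangent vector splits into a part lifted from the base and a part lifted from the fiber, it suffices to test this identity on pairs $(X,Y)$, $(X,U)$ and $(U,V)$ with $X,Y\in\mathfrak{L}(B)$ and $U,V\in\mathfrak{L}(F)$, using $g(X,Y)=g_B(X,Y)$, $g(X,U)=0$ and $g(U,V)=h^2g_F(U,V)$.

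First I would treat the base--base block. By the first line of (\ref{ricciwarped}), the requirement $Ric(X,Y)=a(m+n-1)g(X,Y)$ is equivalent to $Ric_B(X,Y)-mh^{-1}\nabla_B\nabla_B h(X,Y)=a(m+n-1)g_B(X,Y)$, which is exactly the first equation. Next, the mixed block is automatic: the second line of (\ref{ricciwarped}) gives $Ric(X,U)=0$, while $a(m+n-1)g(X,U)=0$ as well, so no condition arises there.

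The crux is the fiber--fiber block. Substituting the third line of (\ref{ricciwarped}) together with $g(U,V)=h^2g_F(U,V)$, the identity $Ric(U,V)=a(m+n-1)g(U,V)$ becomes
\begin{equation*}
Ric_F(U,V)=\big[h\Delta_B h+(m-1)|\nabla_B h|^2+a(m+n-1)h^2\big]\,g_F(U,V).
\end{equation*}
Here the left-hand side, evaluated on lifted fields, depends only on the fiber point, whereas the bracket, call it $\Phi$, is a function on the base alone. The key step -- the only one requiring more than substitution -- is a separation-of-variables argument: fixing a fiber point and a vector $U$ with $g_F(U,U)\neq0$, the quotient $Ric_F(U,U)/g_F(U,U)$ is a fixed number independent of the base point and equals $\Phi$, so $\Phi$ must be constant. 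Writing this constant as $c(m-1)$ defines $c$ (when $m\geq 2$; for $m=1$ the fiber is automatically Ricci flat and the factor is vacuous) and yields simultaneously the second equation $\Phi=c(m-1)$ and the third equation $Ric_F=c(m-1)g_F$. The converse is immediate: granting the three displayed equations, one reassembles $Ric=a(m+n-1)g$ on each of the three blocks through (\ref{ricciwarped}), which is the Einstein condition. I expect the separation argument to be the point to state carefully, since everything else is a direct translation of Proposition \ref{prop_ricciwarped}.
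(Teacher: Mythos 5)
Your proof is correct and follows exactly the route the paper intends: the paper offers no written proof, stating only that the proposition is a direct consequence of Proposition \ref{prop_ricciwarped}, and your block-by-block substitution together with the separation-of-variables step on the fiber--fiber block is precisely that direct consequence, including the correct handling of the trivial $m=1$ case and the converse.
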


If the base $B$ is a connected interval 
$I\subset\mathbb{R}$, then  Proposition 
\ref{einsteincondition} takes a simpler form,  
that we state below,  for future references.

\begin{corollary}\label{einsteinonedimentional}
A semi-Riemannian warped product of the form 
$I\times_{h}F^m$, where $I\subset \mathbb{R}$, is an Einstein 
space if, and only if, $(F^{m},g_{F})$ is an Einstein space and the function $h$ satisfies
	\begin{equation}
	h''\pm ah=0\qquad\text{and}\qquad\pm (h')^{2}+ah^{2}=c, 
	\end{equation}
	where $a$ is the Einstein constant of $\, I\times_{h}F^m$ and $c$ is the Einstein constant of $F$.
\end{corollary}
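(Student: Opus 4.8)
The goal is to specialize Proposition~\ref{einsteincondition} to the case where the base is a one-dimensional interval $I \subset \mathbb{R}$. The plan is to write down the metric on $I$ explicitly and compute how the Hessian, Laplacian, and gradient-norm of $h$ simplify in this setting. On a connected interval, the metric $g_I$ takes the form $\pm dt^2$ according to whether $t$ is a spacelike or timelike coordinate, so $\langle \partial_t, \partial_t \rangle = \pm 1$. Since $I$ is flat and one-dimensional, $Ric_B \equiv 0$ automatically, so the first equation of Proposition~\ref{einsteincondition} reduces to a condition purely on the warping function $h = h(t)$.

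First I would compute the covariant Hessian $\nabla_B\nabla_B h$ on $I$. With the flat metric $\pm dt^2$ and trivial connection, the only nonzero component is $\nabla_B\nabla_B h(\partial_t,\partial_t) = h''$, so as a tensor $\nabla_B\nabla_B h = h'' \, (dt)^2 = \pm h'' \, g_B$. Substituting $Ric_B = 0$ and this expression into the first equation of Proposition~\ref{einsteincondition} gives $-m h^{-1} (\pm h'') g_B = a(m+n-1) g_B$, and with $n = 1$ this becomes $\mp m h^{-1} h'' = ma$, i.e. $h'' \pm a h = 0$ (the sign tracking the signature of the interval). Next I would handle the gradient-norm and Laplacian: $|\nabla_B h|^2 = \pm (h')^2$ and $\Delta_B h = \pm h''$, and I would substitute these into the second equation of Proposition~\ref{einsteincondition}. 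Using $n=1$ so that $a(m+n-1) = am$, the second equation $h\Delta_B h + (m-1)|\nabla_B h|^2 + am h^2 = c(m-1)$ should, after dividing by the appropriate factor and applying the relation $h'' \pm ah = 0$ just derived, collapse to $\pm(h')^2 + ah^2 = c$.

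The third equation $Ric_F = c(m-1)g_F$ carries over unchanged and is exactly the statement that $F$ is Einstein with Einstein constant $c$. The two displayed scalar ODEs for $h$ then follow directly, with the $\pm$ signs inherited consistently from the signature of the interval metric. The main bookkeeping obstacle will be keeping the sign conventions coherent: one must verify that the same choice of sign (spacelike versus timelike $t$) produces the matching signs in both $h'' \pm ah = 0$ and $\pm(h')^2 + ah^2 = c$, and confirm the internal consistency by differentiating the second ODE and comparing with the first (indeed differentiating $\pm(h')^2 + ah^2 = c$ yields $2h'(\pm h'' + ah) = 0$, which is compatible with $h'' \pm ah = 0$ precisely when the signs are chosen as stated). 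This consistency check is the natural way to confirm that the simplification has been carried out correctly rather than a separate difficulty.
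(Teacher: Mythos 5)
Your proposal is correct and is exactly the intended argument: the paper offers no explicit proof of this corollary, presenting it as an immediate specialization of Proposition \ref{einsteincondition} to $n=1$, and your computation of $Ric_B=0$, $\nabla_B\nabla_B h=\pm h''g_B$, $|\nabla_B h|^2=\pm(h')^2$, $\Delta_B h=\pm h''$ fills in precisely the routine bookkeeping that was left implicit. The sign tracking and the consistency check by differentiation are both sound (the only degenerate case, $m=1$, makes the second equation vacuous, but then $F$ carries no Einstein condition either).
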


As an immediate consequence of the properties of the connection in a warped product, proved in \cite{bishop}, one obtain 

\begin{proposition}\label{prop_hesswarped}
	Let $M=B^n\times_{h}F^m$ be a semi-Riemannian warped product. Then the Hessian of a function $f:M\rightarrow\mathbb{R}$ is given by
	\begin{equation}\label{hesswarped}
		\left\{
		\begin{array}[pos]{lll}
			\nabla\nabla f(X,Y)=\nabla_{B}\nabla_{B}f(X,Y),\\\noalign{\smallskip}
			\nabla\nabla f(X,U)=X(U(f))-h^{-1}X(h)U(f),\\\noalign{\smallskip}
			\nabla\nabla f(U,V)=\nabla_{F}\nabla_{F}f(U,V)+h(\nabla_{B}h)f g_{F}(U,V).
		\end{array}
		\right.
	\end{equation}
\end{proposition}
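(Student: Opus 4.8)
The plan is to compute everything straight from the definition of the Hessian, $\nabla\nabla f(V,W)=V(W(f))-(\nabla_{V}W)(f)$, splitting into three cases according to whether each argument is lifted from the base or from the fiber. The only genuine input is the Levi-Civita connection $\nabla$ of the warped metric $g=g_{B}+h^{2}g_{F}$, which I would recall (or rederive from the Koszul formula by separating the tangential and normal components to the fibers). For $X,Y\in\mathfrak{L}(B)$ and $U,V\in\mathfrak{L}(F)$ these formulas are that $\nabla_{X}Y$ is the lift of the base connection of $X$ and $Y$, together with
\begin{equation*}
\nabla_{X}U=\nabla_{U}X=h^{-1}X(h)\,U,\qquad \nabla_{U}V=(\nabla_{U}V)^{\top}-h\,g_{F}(U,V)\,\nabla_{B}h,
\end{equation*}
where the tangential part $(\nabla_{U}V)^{\top}$ is the lift of the fiber connection of $U$ and $V$.

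With these formulas the first two cases are immediate. For $X,Y\in\mathfrak{L}(B)$, the field $\nabla_{X}Y$ stays tangent to the base and the second base derivative of $f$ agrees with the corresponding product derivative, so $\nabla\nabla f(X,Y)=X(Y(f))-(\nabla_{X}Y)(f)$ collapses to $\nabla_{B}\nabla_{B}f(X,Y)$, which is the first identity. For a base field $X$ and a fiber field $U$, substituting $\nabla_{X}U=h^{-1}X(h)U$ gives $\nabla\nabla f(X,U)=X(U(f))-h^{-1}X(h)U(f)$ at once, which is the second identity.

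The third case carries the content. Writing $\nabla\nabla f(U,V)=U(V(f))-(\nabla_{U}V)(f)$ and inserting the decomposition of $\nabla_{U}V$, the tangential term contributes $U(V(f))-(\nabla_{U}V)^{\top}(f)$, which is exactly the fiber Hessian $\nabla_{F}\nabla_{F}f(U,V)$, while the normal term $-h\,g_{F}(U,V)\,\nabla_{B}h$ contributes $+h\,(\nabla_{B}h)f\,g_{F}(U,V)$ after the sign flip from $-(\nabla_{U}V)(f)$. Adding these yields $\nabla\nabla f(U,V)=\nabla_{F}\nabla_{F}f(U,V)+h(\nabla_{B}h)f\,g_{F}(U,V)$, the third identity.

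The delicate point, and the only place where errors can creep in, is getting the normal component of $\nabla_{U}V$ exactly right: the single power of $h$ in $-h\,g_{F}(U,V)\,\nabla_{B}h$ arises from the $h^{2}$ in the metric combined with the $h^{-1}$ incurred when raising an index to form the gradient, and the sign must be tracked carefully through $-(\nabla_{U}V)(f)$. One must also keep in mind that $f$ is a function on the whole product, so that $(\nabla_{B}h)f$ is meaningful and $\nabla_{F}\nabla_{F}f$ is genuinely the Hessian of the restriction of $f$ to a fiber. All the factor and sign bookkeeping concentrates in this last case, whereas the other two are essentially formal.
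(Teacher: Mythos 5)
Your proof is correct, and the paper itself offers no proof of this proposition — it is quoted directly from Bishop--O'Neill and O'Neill's book, where the argument is exactly the one you give: apply $\nabla\nabla f(V,W)=V(W(f))-(\nabla_V W)(f)$ case by case using the warped-product connection formulas. Your bookkeeping in the third case (the single power of $h$ in the normal component $-h\,g_F(U,V)\nabla_B h$, coming from $\mathrm{II}(U,V)=-h^{-1}g(U,V)\nabla_B h$ with $g(U,V)=h^2 g_F(U,V)$, and the sign flip from $-(\nabla_U V)(f)$) is exactly right.
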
	

By a complete semi-Riemannian manifold we mean a semi-Riemannian manifold where each geodesic can be extended  to $\mathbb{R}$. 
In the Riemannian case one shows the following.

\begin{proposition}\cite{bishop}\label{completeriemannian}
	A Riemannian warped product $B\times_{h}F$ is complete if, and only if, $B$ and $F$ are complete.
\end{proposition}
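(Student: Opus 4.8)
The plan is to pass from geodesic completeness to metric completeness via the Hopf--Rinow theorem \cite{oneill} (this is where the Riemannian hypothesis is essential, since no such equivalence holds in indefinite signature) and then to argue entirely with the distance functions $d_g$, $d_B$ and $d_F$ induced by $g$, $g_B$ and $g_F$. The whole argument rests on two elementary length comparisons for a curve $\gamma(t)=(\alpha(t),\beta(t))$ in $B\times_h F$: from $|\gamma'|_g^2=|\alpha'|_{g_B}^2+(h\circ\alpha)^2|\beta'|_{g_F}^2$ one reads off $|\gamma'|_g\ge|\alpha'|_{g_B}$ and $|\gamma'|_g\ge (h\circ\alpha)\,|\beta'|_{g_F}$. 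The first shows that the projection $\pi$ is distance non-increasing, so $d_B(\pi p,\pi q)\le d_g(p,q)$; taking $\beta$ constant it also gives $d_g((b,y_0),(b',y_0))\le d_B(b,b')$. Taking $\alpha\equiv b_0$ constant, the slice $\{b_0\}\times F$ is a homothetic copy of $F$ with factor $h(b_0)$, whence $d_g((b_0,y),(b_0,y'))\le h(b_0)\,d_F(y,y')$.

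For the forward implication I assume $M=B\times_h F$ complete. Given a $d_B$-Cauchy sequence $\{b_k\}$, I fix $y_0\in F$; by the comparison above $\{(b_k,y_0)\}$ is $d_g$-Cauchy, hence converges to some $p$, and since $\pi$ is $1$-Lipschitz, $d_B(b_k,\pi p)\le d_g((b_k,y_0),p)\to0$, so $B$ is complete. Given a $d_F$-Cauchy sequence $\{y_k\}$, I fix $b_0\in B$; then $\{(b_0,y_k)\}$ is $d_g$-Cauchy and converges to some $p$; applying the continuous map $\pi$ shows $p$ lies in the slice $\{b_0\}\times F$, which is an embedded submanifold, so the metric topology of $M$ restricted to it agrees with the topology of $F$, and therefore $y_k$ converges in $F$. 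Thus $F$ is complete.

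The reverse implication is the delicate one. Assume $B$ and $F$ complete and let $\{p_k=(b_k,y_k)\}$ be $d_g$-Cauchy. Since $\pi$ is $1$-Lipschitz, $\{b_k\}$ is $d_B$-Cauchy and converges to some $b^*$. The crux is to show $\{y_k\}$ is $d_F$-Cauchy. I would fix a compact neighbourhood $K$ of $b^*$ and $\rho>0$ so that the open $d_B$-ball $\mathcal{B}(b^*,2\rho)\subset K$ and $h\ge c_0>0$ on $K$, using continuity and positivity of $h$. For indices large enough that $b_k\in\mathcal{B}(b^*,\rho/2)$ and $d_g(p_k,p_j)<\rho/2$, any path $\gamma=(\alpha,\beta)$ from $p_k$ to $p_j$ with $L_g(\gamma)<\rho$ projects to a base path $\alpha$ of length $<\rho$ issuing from $b_k$, hence every point of $\alpha$ lies within $3\rho/2$ of $b^*$, so $\alpha$ stays inside $\mathcal{B}(b^*,2\rho)\subset K$; the second length comparison then gives $L_g(\gamma)\ge\int (h\circ\alpha)\,|\beta'|_{g_F}\,dt\ge c_0\,L_{g_F}(\beta)\ge c_0\,d_F(y_k,y_j)$. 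Taking the infimum over such near-minimizing $\gamma$ yields $d_F(y_k,y_j)\le c_0^{-1}d_g(p_k,p_j)$, so $\{y_k\}$ is $d_F$-Cauchy and converges to some $y^*$. Finally $(b_k,y_k)\to(b^*,y^*)$ in the product topology, which coincides with the metric topology of $M$, so $p_k$ converges and $M$ is metrically, hence geodesically, complete.

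The main obstacle is precisely this confinement argument: a priori a short $g$-geodesic between points of distant fibers could accrue little length if it passed through a region where $h$ is very small, so control of $d_F$ by $d_g$ is not automatic. What rescues the estimate is that a $d_g$-short path also has $d_B$-short projection, forcing it to remain near $b^*$, where $h$ is bounded below by a positive constant; pinning down the radius $\rho$ and the constant $c_0$ uniformly for large indices is the only point requiring genuine care.
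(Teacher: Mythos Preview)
The paper does not supply its own proof of this proposition; it is quoted from \cite{bishop} and \cite{oneill} as a known result, so there is no in-paper argument to compare against. Your proof via Hopf--Rinow and metric completeness is correct and is essentially the standard one found in the cited references: the projection $\pi$ being $1$-Lipschitz handles $B$, the homothetic slice handles $F$, and for the converse the confinement argument you describe (short $g$-paths have short $B$-projections, hence stay where $h$ is bounded below) is exactly the right idea and is carried out carefully. The only cosmetic point is that the existence of a compact neighbourhood $K$ of $b^*$ with $h\ge c_0>0$ uses only local compactness of $B$ and continuity and positivity of $h$, not completeness of $B$; you use completeness of $B$ only to produce the limit $b^*$.
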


Been and Busemann showed that $(\mathbb{R}\times\mathbb{R}, dx^2-e^{2x}dy^2)$ is not a complete semi-Riemannian manifold. In fact, they showed that there are light like geodesics that can not be extended to  $\mathbb{R}$, see \cite{oneill} (page 209). 
Their example shows that there is no result similar to Proposition \ref{completeriemannian} for  indefinite signature.

For our purposes we have the following result that guarantees the non completeness of the semi-Riemannian warped product, whenever the gradient of the warping function is a parallel vector field on the base. For more results on completeness of semi-Riemannian manifolds see \cite{semiriemanniancomplete}

\begin{proposition}\cite{kimkim}\label{nondependenceontehfiber}
	Let $B\times_{h}F$ be a non trivial warped product, where $(B^n,g_B)$ or $(F^m,g_{F})$ can be either a Riemannian or a semi-Riemannian manifold. If $\nabla_{B}h$ is a parallel vector field on $B$, then $B\times_{h}F$ is not complete.
\end{proposition}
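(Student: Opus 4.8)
The plan is to exhibit a single geodesic of the warped product that cannot be extended past a finite value of its affine parameter, by combining two facts: the base leaves of a warped product are totally geodesic, and the parallelism hypothesis forces the Hessian of $h$ on $B$ to vanish identically.

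First I would record the consequence of the hypothesis. If $\nabla_B h$ is parallel, then $\nabla_X \nabla_B h = 0$ for every $X$, so the Hessian of $h$ on $B$ vanishes identically, $\nabla_B\nabla_B h \equiv 0$. Hence along any geodesic $\alpha$ of $B$ the function $h\circ\alpha$ satisfies $(h\circ\alpha)'' = \nabla_B\nabla_B h(\alpha',\alpha') = 0$, that is, $h\circ\alpha$ is an affine function of the affine parameter.

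Next I would use that each base leaf $B\times\{q\}$ is totally geodesic in $B\times_h F$ (see \cite{oneill}); equivalently, a curve $s\mapsto(\alpha(s),q)$ with $q\in F$ fixed is a geodesic of the warped product if and only if $\alpha$ is a geodesic of $B$. Since the warped product is non trivial, $h$ is non constant, so there is a point $p\in B$ with $\nabla_B h(p)\neq 0$. By nondegeneracy of $g_B$ I can choose $v\in T_pB$ with $g_B(\nabla_B h(p),v)=m\neq 0$, and let $\alpha$ be the geodesic of $B$ with $\alpha(0)=p$ and $\alpha'(0)=v$. Then $h\circ\alpha(s)=h(p)+ms$, an affine function with nonzero slope, which vanishes at the finite parameter $s_\ast=-h(p)/m$.

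Finally I would argue inextendibility of $\gamma=(\alpha,q)$. Following $\gamma$ toward $s_\ast$, either $\alpha$ already fails to extend in $B$ before reaching $s_\ast$, in which case $\gamma$ is incomplete, or $\alpha$ is defined up to $s_\ast$; but then $h(\alpha(s))\to 0$ as $s\to s_\ast$, so $\alpha(s)$ cannot converge to any point of $B$, because $h>0$ throughout $B$. In either case $\gamma$ is a geodesic of $B\times_h F$ that cannot be extended past the finite value $s_\ast$, so the warped product is not complete. The point I expect to be the main subtlety is making the construction uniform: what makes it work independently of the causal character of $\nabla_B h$ is that the argument uses only $\nabla_B\nabla_B h\equiv 0$ and the strict positivity of $h$, never the sign of $|\nabla_B h|^2$, so it covers the proper and the improper (lightlike) cases at once; the only mild care needed is in allowing the base geodesic itself to be incomplete, which is harmless since that case also yields incompleteness of the warped product.
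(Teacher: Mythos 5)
Your proof is correct and takes essentially the same route as the paper's: parallelism of $\nabla_B h$ gives $\nabla_B\nabla_B h\equiv 0$, so $h$ is affine with nonzero slope along a base geodesic chosen with $dh(v)\neq 0$, and it would have to vanish at a finite parameter value, contradicting $h>0$. The only difference is presentational: you make explicit the totally geodesic leaf argument linking geodesics of $B$ to geodesics of $B\times_h F$, a step the paper uses implicitly when it asserts that the base geodesic is defined on all of $\mathbb{R}$.
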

\begin{proof}
	Suppose by contradiction that $B\times_{h}F$ is complete. Consider $p_{0}\in B$ and $v_{0}\in T_{p_{0}}B$ such that $dh_{p_{0}}v_{0}\neq0$. Let  $\gamma$ be the geodesic such that $\gamma(0)=p_{0}$ and $\gamma'(0)=v_{0}$. Since $\nabla_{B}h$ is parallel, it follows that 	
	\begin{equation*}
	\begin{array}[pos]{lll}
	(h\circ\gamma)''(t)&=\gamma'(\gamma'(h))
		=\gamma'(\gamma'(h))-
\nabla_{B_{\gamma'}}\gamma'(h)\\\noalign{\smallskip}
	&=\nabla_{B}\nabla_{B}h(\gamma',\gamma')
	=0.
	\end{array}
	\end{equation*}
	Therefore, there  exist constants $a_{0}, b_{0}\in\mathbb{R}$, so that 
	\begin{equation*}\label{affineexpression}
	(h\circ\gamma)(t)=a_{0}t+b_{0}.
	\end{equation*}
	Observe that
	\begin{equation*}
	a_{0}=(h\circ\gamma)'(0)
	=dh_{p_{0}}v_{0}
	\neq 0.
	\end{equation*}
By assumption $\gamma$ is defined on $\mathbb{R}$, hence we may consider $t_{0}=-b_{0}/a_{0}\in\mathbb{R}$. However, $ h(\gamma(-b_0/a_0))=0$,  which contradicts the fact that $h\neq 0$.
\end{proof}


\subsection{Bochner Formula}

In this section we will state a version of the Bochner formula that will be used in the next section. For a proof in the Riemannian case, see Lemma 2.1 of \cite{petersen}. We observe that the same proof is valid for any signature.

\begin{theorem}[\cite{petersen}]
	Let $(M,g)$ be a Riemannian or semi-Riemannian manifold and let $\varphi:M\rightarrow\mathbb{R}$ be a smooth function. Then
	\begin{equation}\label{bochner}
	div(\nabla\nabla\varphi)(X)=Ric(\nabla\varphi,X)+X(\Delta\varphi),
	\end{equation}
	for all $X\in\mathfrak{L}(M)$.
\end{theorem}
With this version of Bochner formula, we can provide a simple proof of the proposition below when $n\geq2$. For another proof when $n\geq3$ see (\cite{kunel2}).
\begin{proposition}\label{otherprove}
	Let $(M^{n},g)$ be an Einstein manifold with dimension $n\geq2$ and Einstein constant $a$. If $\varphi:M\rightarrow\mathbb{R}$ is a smooth function such that $\nabla\varphi$ is a conformal vector field satisfying
	\begin{equation*}
	\nabla\nabla\varphi+\phi g=0
	\end{equation*}
	for  some smooth function $\phi:M\rightarrow\mathbb{R}$, then there is a constant $b\in\mathbb{R}$ such that $\phi=-a\varphi-b$.
\end{proposition}
\begin{proof}
	It is  easy to see that $\Delta\varphi=n\phi$ and that $div(\nabla\nabla\varphi)(X)=X(\phi)$, for all $X\in\mathfrak{X}(M)$. Using Bochner formula, we have
	\begin{equation*}
	(n-1)X(\phi+a\varphi)=0.
	\end{equation*}
	Since $X$ is an arbitrary field and $n\geq2$, it follows that there is a constant $b$ satisfying the assertion.
\end{proof}


\section{Proof of the main results}\label{Proofofthemainresults}
We start with an important decomposition property of the potential function of a  Ricci almost soliton on a warped product. We prove that the potential function decomposes in terms of functions which depend either on the base or on the fiber.
 
\begin{proposition}\label{decompose}
Let $(B^n\times_{h}F^m,g,f,\lambda)$ be a Ricci almost soliton defined on a warped product manifold, where the base $(B^n,g_{B})$ or the fiber  $(F^m,g_{F})$ are either Riemannian or semi-Riemannian manifolds, $h:B\rightarrow\mathbb{R}$ is a positive smooth function and $g=g_B+h^2g_F$. Then the  potential function $f$ can be decomposed as 
	\begin{equation}\label{property}
	f=\beta+h\varphi,
	\end{equation}
	where $\beta:B\rightarrow\mathbb{R}$ and $\varphi:F\rightarrow\mathbb{R}$ are smooth functions and  the fundamental equation (\ref{eqriccisoliton}) is equivalent to the system
	\begin{equation}\label{system_characterization}
	\left\{ 
	\begin{array}[pos]{ll}
	Ric_{B}+\nabla_{B}\nabla_{B}\beta + (\varphi-mh^{-1})\nabla_{B}\nabla_{B}h=\lambda g_{B},\\
	Ric_{F}+h\nabla_{F}\nabla_{F}\varphi=[h\Delta_{B}h+(m-1)|\nabla_{B}h|^{2}-h(\nabla_{B}h)\beta-\varphi h(\nabla_{B}h)h+\lambda h^{2}]g_{F}.
	\end{array}
	\right.
	\end{equation}
\end{proposition}

\noindent \begin{proof}
In view of Proposition \ref{prop_ricciwarped} and  Proposition \ref{prop_hesswarped} we can rewrite the fundamental equation (\ref{eqriccisoliton}) as follows
\begin{equation}\label{eqsolitonwarped}
\left\{ 
\begin{array}[pos]{lll}
Ric_B(X,Y)-m h^{-1}\nabla_{B}\nabla_{B}h(X,Y)+\nabla_{B}\nabla_{B}f(X,Y)=\lambda g_{B}(X,Y),\\\noalign{\smallskip}
Ric_{F}(U,V)+\nabla_{F}\nabla_{F}f(U,V)=[\lambda h^2+(m-1)h^{-2}|\nabla_{B}h|^{2}+h^{-1}\Delta_{B}h-h(\nabla_{B}h)f] g_{F}(U,V),\\\noalign{\smallskip}
X(U(f)) = h^{-1}X(h)U(f).
\end{array}
\right.
\end{equation}
Observe that $X(U(f))-h^{-1}U(f)X(h)=0$ implies
\begin{equation*}\label{equation_to_modify}
\begin{array}[pos]{lll}
X(U(fh^{-1}))&=& X(U(f)h^{-1})\\\noalign{\smallskip}
&=& X(U(f))h^{-1}-U(f)h^{-2}X(h)\\\noalign{\smallskip}
&=&0,
\end{array}
\end{equation*}
for all $X\in\mathfrak{L}(B)$ and all $U\in\mathfrak{L}(F)$. 

Therefore, there are smooth functions $\beta:B\rightarrow\mathbb{R}$ and $\varphi:F\rightarrow\mathbb{R}$ such that the potential function $f$ decomposes as in  (\ref{property}). 
Substituting (\ref{property}) in the first two equations of (\ref{eqsolitonwarped}), a straighforward computation implies that  (\ref{system_characterization}) holds.
\end{proof}

In order to analyse the system (\ref{system_characterization}), we will consider separately the cases where the potential function $f$ depends or not on the fiber. We observe that when the warping function $h$ is constant, the warped product reduces to the Riemannian or semi-Riemannian product. In this case, the base and the fiber must be Ricci solitons, as we can easily see from (\ref{system_characterization}). So, from now on, we will assume that $h$ is not constant.

For the  proof of  Theorem \ref{withdependence}, we will need the following 
lemma.

\begin{lemma}\label{important_lemma} 
Let $B^n \times F^m$ be a product manifold and $h:B^n\rightarrow \mathbb{R},\; \varphi:F^m\rightarrow \mathbb{R}$ non constant differentiable functions. Let $\mu_1,\, \rho_1:D\subset B\rightarrow \mathbb{R}$ and  $\mu_2,\, \rho_2:G\subset F\rightarrow \mathbb{R}$ be differentiable functions, such that $D\times G$ is connected. Then 
\begin{equation}\label{difference_encoded}
h(p)\mu_2(q)+\varphi(q) \mu_1(p)=\rho_1(p)+\rho_2(q), \quad \forall 
(p,q)\in D\times G. 
\end{equation} 
if, and only if, there are constants $b,\ \tilde{b},\ c,\ \tilde{c}\in \mathbb{R}$ such that 
\begin{equation}\label{partial_system}
	\left\{
	\begin{array}[pos]{lll}
	\mu_{1}=ch+\tilde{c},\\\noalign{\smallskip}
	\rho_{1}=-bh+\tilde{b},\\\noalign{\smallskip}
	\mu_{2}=-c\varphi-b,\\\noalign{\smallskip}
	\rho_{2}=\tilde{c}\varphi-\tilde{b},
	\end{array}
	\right.
	\end{equation}
for all  $p\in D$ and $q\in G$. 	
\end{lemma} 

\begin{proof}[\textbf{Proof.}]
Assume that the relation (\ref{difference_encoded}) holds.  
Since  $h$ and $\varphi$ are not constant, we consider $(p_{0},q_{0})\in D\times G$ such that $p_{0}$ and $q_{0}$ are regular points of the functions $h$ and $\varphi$, respectively. Then there exists a vector field $X_1$ on a connected neighborhood $D_1\subset D$ of $p_0$ and a vector field $U_1$ on a connected neighborhood $G_1\subset G$
of $q_0$ such that
\[
X_1(h)(p)\neq 0, \quad U_1(\varphi)(q)\neq 0,\quad \forall p\in D_1,\; q\in G_1.
\]
Consider $X_1, X_2,...,X_n$ and $U_1,U_2,...,U_m$ orthogonal frames locally defined in (neighborhoods that we still denote by) $D_1$ and $G_1$ respectively. Applying the vector fields $X_k$, $k=1,...,n$ and $U_\alpha$, $\alpha=1,...,m$ to the relation (\ref{difference_encoded}) we get that 
\begin{equation}\label{difference_encoded_derived_again}
X_k(h)U_\alpha(\mu_2)=-X_k(\mu_1)U_\alpha(\varphi), \qquad\forall k,\, \alpha .
\end{equation}
In particular, we have 
\[
\frac{ X_1(\mu_1) }{X_1(h)}=-\frac{U_1(\mu_2)}{U_1(\varphi)}=c, \quad  \mbox{ in } D_1 \mbox{ and } G_1, 
\]
for some constant $c\in\mathbb{R}$. Hence 
\begin{equation}\label{constantc}
X_1(\mu_1)=cX_1(h)\quad \mbox{ in } D_1 \mbox{\quad and \quad}
U_1(\mu_2)=-cU_1(\varphi)  \quad \mbox{ in } G_1. 
\end{equation}
We want to show that this expression holds for all $X_i$ and $U_\alpha$. 
Fix $p_1\in D_1$ and consider $X_i(h)(p_1)$ for $i\geq 2$. 
If $X_i(h)(p_1)\neq 0$, shrinking $D_{2}$ if necessary, we can assume that $X_i(h)\neq 0$ in $D_1$. Then it follows from (\ref{difference_encoded_derived_again}) and (\ref{constantc}) that in $D_{1}$
\[
\frac{X_i(\mu_1)}{X_i(h)}=-\frac{U_1(\mu_2)}{U_1(\varphi)}=c. 
\]  
Therefore,  
\[ 
X_i(\mu_1)=cX_i(h) \qquad \mbox{ in } D_{1}.
\] 
If $X_i(h)(p_1)= 0$, then it follows from (\ref{difference_encoded_derived_again}) that $U_1(\varphi)X(\mu_1)(p_1)=0$ therefore $X_i(h)(p_1)=cX_i(\mu_1)(p_1)$. We conclude that for all $i$ and $\alpha$  we have  
\[
X_i(\mu_1-ch)=0  \qquad \mbox{ in }D_{1}.
\]
Similarly,we get that 
\[
U_\alpha(\mu_2+c\varphi)=0  \qquad \mbox{ in }G_{1}.
\]

From the last two expressions we conclude that there exist constants 
$\tilde{c},b\in\mathbb{R}$ such that 
\[
\mu_1-ch=\tilde{c}, \quad \mbox{ in } D_{1}  \qquad \mu_2+c\varphi=-b,\quad \mbox{ in } G_{1}.
\] 
It follows from (\ref{difference_encoded}) that 
\[
\rho_1+bh=\tilde{c}\varphi-\rho_2=\tilde{b}.
\]
Therefore, we obtained (\ref{partial_system}) in $D_{1}\times G_{1}$.

If there is $p_{1}\in D\backslash D_{1}$, using (\ref{difference_encoded}) in $p_{1}$ and (\ref{partial_system}) in $q\in G_{1}$ we have
\begin{equation*}
\begin{array}[pos]{lll}
\varphi(q)(-ch(p_{1})+\mu_{1}(p_{1})-\tilde{c})=\rho_{1}(p_{1})+bh(p_{1})-\tilde{b},
\end{array}
\end{equation*}
for all $q\in G_{1}$. Applying $X_{1}$ on the above identity and how $\varphi$ is not constant on $G_{1}$ it follows that (\ref{partial_system}) holds on $D\times G_{1}$. Analogously if there is $q_{1}\in G\backslash G_{1}$, we can use (\ref{difference_encoded}) in $q_{1}$, (\ref{partial_system}) in $p\in D_{1}$ and the non constancy of $h$ on $D_{1}$ to prove (\ref{partial_system}) on whole $D\times G$.
\end{proof}

\begin{proof}[\textbf{Proof of Theorem \ref{withdependence}.} ]
If $(B^n\times_h F^m, g,f,\lambda)$ is a Ricci almost soliton then it follows from Theorem \ref{decompose} that $f=\beta+h\varphi$ and the system (\ref{system_characterization}) is satisfied. We are assuming that  $h$ is not constant and $f$ depends on the fibers. Hence  $\varphi$ is not constant. 

Considering the system (\ref{system_characterization}) evaluated at   pairs of orthogonal vector fields $(X,Y)$, $X,Y\in \mathfrak{X}(B)$  and $(U,V)$, $U,V\in \mathfrak{X}(F)$   locally defined  on a neighborhood of any point $(p,q)\in B\times F$, we have    
\begin{equation}\label{two_eq_orthogonal}
\left\{ 
\begin{array}[pos]{l}
Ric_B(X,Y)+\nabla_{B}\nabla_{B}\beta(X,Y)+(\varphi-mh^{-1})\nabla_{B}\nabla_{B}h(X,Y)=0,\\\noalign{\smallskip}
Ric_F(U,V)+h\nabla_{F}\nabla_{F}
\varphi(U,V)=0.
\end{array}
\right.
\end{equation}

Fix $p_1\in B$ and consider an open neighborhood $G_1\subset B$ of regular points $q$ of $\varphi$ and $W$  a vector field such that  $W(\varphi)\neq  0$ in $G_1$. Considering the first equation of \eqref{two_eq_orthogonal} at the points $(p_1,q)$ and  applying $W$ to this equation, we get that 
\[
  	\begin{array}[pos]{l}
	\nabla_{B}\nabla_{B}h(X,Y)(p_1)=0, \\ \noalign{\smallskip}
	Ric_{B}(X,Y)(p_1)+\nabla_{B}\nabla_{B}\beta(X,Y)(p_1)=0
\end{array}\qquad \forall p_1\in B.
\]

Similarly, by fixing $q_1\in F$ and considering an open neighborhood  $D_1\subset B$, of regular points  $p$ of $h$, we obtain from the second equation of (\ref{two_eq_orthogonal}) that 
\[
\begin{array}[pos]{l}
\nabla_{F}\nabla_{F}\varphi(U,V)(q_1)=0, \\ \noalign{\smallskip}	Ric_{F}(U,V)(q_1)=0 
	\end{array} \qquad \forall q_1\in F. 
\]

Therefore, for any pairs of orthogonal vector fields $(X,Y)$ and $(U,V)$, locally defined in $B\times F$,  we have   
\begin{equation}\label{complement}
	\left\{
	\begin{array}[pos]{lll}
	\nabla_{B}\nabla_{B}h(X,Y)=0,  \\ \noalign{\smallskip}
	Ric_{B}(X,Y)+\nabla_{B}\nabla_{B}\beta(X,Y)=0,\\\noalign{\smallskip}	\nabla_{F}\nabla_{F}\varphi(U,V)=0, \\ \noalign{\smallskip}
	Ric_{F}(U,V)=0.
	\end{array}
	\right.
	\end{equation}

Let  $(p_0,q_0)\in B\times F$ such that $p_0$ and $q_0$ are regular points of the functions $h$ and $\varphi$ respectively. Then there exist vector fields $X_1$ and $U_1$ defined 
one open connected sets $D\subset B$ and $G\subset F$ with $p_0\in D$ and $q_0\in G$, such that 
\begin{equation}  \label{X1U1}
X_1(h)(p)\neq 0, \quad \forall p\in D, \qquad \qquad U_1(\varphi)(q)\neq 0,\quad 
\forall q\in G.
\end{equation}
Let $\{X_1,X_j\}_{j=2}^n$  and  $\{U_1,U_\alpha\}_{\alpha=2}^m$ be orthogonal vector fields on $D$ and $G$ respectively. Without loss of generality we may consider  
\begin{equation}\label{initial_condition}
\left\{ 
\begin{array}[pos]{lll}
g_{B}(X_{j},X_{k})=\epsilon_{j}\delta_{jk}h^2, \qquad \forall j,\ k\in\{1,\ldots,n\},\\\noalign{\smallskip}
g_{F}(U_{\alpha},U_{\gamma})=\varepsilon_{\alpha}\delta_{\alpha\gamma}, \; \qquad \forall \alpha,\ \gamma\in\{1,\ldots,m\}, 
\end{array}
\right.
\end{equation}
where $\epsilon_{j}$ and $\varepsilon_{\alpha}$ denote the signatures of the vector fields. 

Now we consider the system (\ref{system_characterization}) evaluated at the  pairs $(X_j,X_j)$  and $(U_\alpha, U_\alpha)$. Subtracting the first equation multiplied by $\epsilon_j$ from the second one mutiplied by 
$\varepsilon_\alpha$, we get the following expression 
\begin{equation}\label{difference_encoded_j_alpha}
\varphi(q)\mu_{1j}(p)+h(p)\mu_{2\alpha}(q)=\rho_{1j}(p)+\rho_{2\alpha}(q), \qquad \forall (p,q)\in D\times G, 
\end{equation}
where $1\leq j\leq n,\quad 1\leq \alpha \leq m$ and  
\begin{equation}\label{key}
\left\{
\begin{array}[pos]{lll}
\mu_{1j}=-\epsilon_j \nabla_B\nabla_Bh (X_{j},X_{j})+h|\nabla_B h|^2, \\ \noalign{\smallskip}
\rho_{1j}=h\Delta_B h +(m-1)|\nabla_{B}h|^{2}-h(\nabla_B h) \beta +
\epsilon_{j}[Ric_{B}+\nabla_{B}\nabla_{B}\beta -mh^{-1}\nabla_{B}\nabla_{B} h ](X_{j},X_{j}), 
\\\noalign{\smallskip}
\mu_{2\alpha}=\varepsilon_{\alpha}\nabla_{F}\nabla_{F}\varphi(U_{\alpha},U_{\alpha}),\\ \noalign{\smallskip}
\rho_{2\alpha}=-\varepsilon_{\alpha}Ric_{F}(U_{\alpha},U_{\alpha}).
\end{array}
\right.
\end{equation}

In view of Lemma \ref{important_lemma}, it follows from (\ref{difference_encoded_j_alpha}) that, for each pair $(j, \alpha)$, there exist contants  $a_{j\alpha}, b_{j\alpha}, c_{j\alpha}, d_{j\alpha}$, 
such that 
\begin{equation}\label{partial_system_j+alpha}
	\left\{
	\begin{array}[pos]{lll}
	\mu_{1j}=c_{j\alpha}h+\tilde{c}_{j\alpha},\\\noalign{\smallskip}
	\rho_{1j}=-b_{j\alpha}h+\tilde{b}_{j\alpha},\\\noalign{\smallskip}
	\mu_{2\alpha}=-c_{j\alpha}\varphi-b_{j\alpha},\\\noalign{\smallskip}
	\rho_{2\alpha}=\tilde{c}_{j\alpha}\varphi-\tilde{b}_{j\alpha}.
	\end{array}
	\right.
	\end{equation}
Therefore,
\[
\frac{X_1(\mu_{1j})}{X_1(h)}=c_{j\alpha},\qquad 
\frac{X_1(\rho_{1j})}{X_1(h)}=-b_{j\alpha}, 
\]
\[
\frac{U_1(\mu_{2\alpha})}{U_1(\varphi)}=-c_{j\alpha},\qquad 
\frac{U_1(\rho_{2\alpha})}{U_1(\varphi)}=\tilde{c}_{j\alpha}, 
\]
i.e.,  $c_{j\alpha}$, $b_{j\alpha}$ do not depend on $\alpha$, 
and $c_{j\alpha}$ and $\tilde{c}_{j\alpha}$ do not depend on $j$.
Hence we denote $c_{j\alpha}=c$, $b_{j\alpha}=b_j$ and $\tilde{c}_{j\alpha}=\tilde{c}_\alpha$. Moreover, 
it follows from  (\ref{partial_system_j+alpha}) that 
\[
\mu_{1j}-ch=\tilde{c}_{\alpha} \qquad \mbox{ and } \qquad
 \mu_{2\alpha}+c\varphi=-b_{j}. 
 \]
Therefore, $\tilde{c}_\alpha$ does not depend on $\alpha$ and $b_j$ does not depend on $j$. Hence we may denote $\tilde{c}_\alpha=\tilde{c}$ , $b_j=b$ and 
\[
\rho_{1j}+bh=\tilde{b}_{j\alpha}, \qquad \qquad \rho_{2\alpha}-\tilde{c}\varphi=-\tilde{b}_{j\alpha}.
\]
We conclude that $\tilde{b}_{j\alpha}$ does not depend on $j$ and $\alpha$ and we can denote $\tilde{b}_{j\alpha}=\tilde{b}$. Therefore, it follows from (\ref{key}) and  (\ref{partial_system_j+alpha})  that in $D\times G$ we have 
\begin{equation}\label{key2}
\left\{
\begin{array}[pos]{l}
-\epsilon_j \nabla_B\nabla_Bh (X_{j},X_{j})+h|\nabla_B h|^2=ch+\tilde{c},\\\noalign{\smallskip}
h\Delta_B h +(m-1)|\nabla_{B}h|^{2}-h(\nabla_B h) \beta +
\epsilon_{j}[Ric_{B}+\nabla_{B}\nabla_{B}\beta -mh^{-1}\nabla_{B}\nabla_{B} h ](X_{j},X_{j})=-bh+\tilde{b},
\\\noalign{\smallskip}
\varepsilon_{\alpha}\nabla_{F}\nabla_{F}\varphi(U_{\alpha},U_{\alpha})=
-c\varphi-b\\\noalign{\smallskip}
-\varepsilon_{\alpha}Ric_{F}(U_{\alpha},U_{\alpha})=\tilde{c}\varphi-\tilde{b}.
\end{array}
\right.
\end{equation}
Considering (\ref{complement})  for  the  orthogonal vector fields $\{X_j\}_{j=1}^n$, $\{U_\alpha\}_{\alpha=1}^m$  it follows from (\ref{key2})  that in $D\times G$ we have 
\begin{equation}\label{pre_system_withdependence2}
\left\{
\begin{array}[pos]{l}
\nabla_B\nabla_Bh +\left[ch^{-1}+\tilde{c}h^{-2}-h^{-1}|\nabla_Bh|^2\right]g_B=0, \\ \noalign{\smallskip}
Ric_{B}+\nabla_{B}\nabla_{B}\beta+\left\{ h\Delta_B h -|\nabla_{B}h|^{2}-h(\nabla_B h) \beta-\tilde{b}+bh+m\tilde{c}h^{-1}+mc\right\}h^{-2}g_B=0, \\ \noalign{\smallskip}
\nabla_{F}\nabla_{F}\varphi+(c\varphi +b)g_F=0, \\ \noalign{\smallskip}
Ric_F+(\tilde{c}\varphi -\tilde{b})g_F=0, 
\end{array}
\right.
\end{equation}

We will now prove that (\ref{pre_system_withdependence2}) holds in $B\times F$. Let $p_1\in B$ and $X\in T_{p_1}B$ such that $g_B(X,X)=\epsilon_X h^2(p_1)$, where $\epsilon_X=\pm 1$.
Consider $q\in G$ and the system (\ref{system_characterization}) at the pair of vectors $(X,X)$ and the pair of vectors fields $(U_1,U_1)$ at $(p_1,q)$, $q\in G$. Multiplying the first equation by $-\epsilon_X$ and adding to the second equation multiplied by $\varepsilon_1$, we get 
\begin{equation}\label{differenceXU1}
\varphi(q)\mu_{1X}(p_1)+h(p_1)\mu_{21}(q)=\rho_{1X}(p_1)+\rho_{21}(q),
\qquad \forall q\in G, 
\end{equation}
where 
\begin{equation}\label{keyXU1}
\left\{
\begin{array}[pos]{lll}
\mu_{1X}=-\epsilon_X \nabla_B\nabla_Bh (X,X)+h|\nabla_B h|^2, \\ \noalign{\smallskip}
\rho_{1X}=h\Delta_B h +(m-1)|\nabla_{B}h|^{2}-h(\nabla_B h) \beta +
\epsilon_{j}[Ric_{B}+\nabla_{B}\nabla_{B}\beta -mh^{-1}\nabla_{B}\nabla_{B} h ](X,X), 
\\\noalign{\smallskip}
\mu_{21}=-c\varphi -b,\\ \noalign{\smallskip}
\rho_{21}=\tilde{c}\varphi -\tilde{b}, 
\end{array}
\right.
\end{equation}
where the last two equalities follow from (\ref{key2}) and  the fact that $q\in G$.  Therefore, (\ref{differenceXU1}) reduces to 
\[
[-ch(p_1)+\mu_{1X}(p_1)-\tilde{c}]\varphi(q)=bh(p_1)+\rho_{1X}(p_1)-\tilde{b}, \qquad \forall q\in G.
\]
Applying the vector field $U_1$ to this equation, we conclude that
\begin{equation}\label{mu1Xrho1X}
\mu_{1X}(p_1)=ch(p_1)+\tilde{c}, \qquad \rho_{1X}(p_1)=-bh(p_1)+\tilde{b}. 
\end{equation}

Similarly,  considering  $q_1\in F$ and $U\in T_{q_1}F$ such that 
$g_F(U,U)=\varepsilon_U= \pm 1$, for all $p\in D$ the equations of 
(\ref{system_characterization}) evaluated at the pairs $(X_1,X_1)$ and 
$(U,U)$ will imply that 
\[
\varphi(q_1)\mu_{11}(p)+h(p)\mu_{2U}(q_1)=\rho_{11}(p)+\rho_{2U}(q_1),
 \]
 where  
 \[
 \mu_{2U}=\varepsilon_U \nabla_F\nabla_F\varphi(U,U),  \qquad 
 \rho_{2U}=-\varepsilon_U Ric_F(U,U).
 \]
Analogue arguments as before will imply that 
\begin{equation}\label{mu2Urho2U}
\mu_{2U}(q_1)=-c\varphi(q_1)-b,\qquad \rho_{2U}(q_1)=\tilde{c} \varphi(q_1)-\tilde{b}.
\end{equation}

Since $p_1\in B$ and $q_1\in F$ are arbitrary, we conclude that for any locally defined vector fields $X\in \mathfrak{X}(B)$ and $U\in\mathfrak{X}(F)$, such that $g_B(X,X)=\epsilon_X h^2$ and $g_F(U,U)=\varepsilon_U $ we have that (\ref{mu1Xrho1X}) and (\ref{mu2Urho2U}) hold. We now  consider any  point $(p_1,q_1)\in B\times F$ and  orthogonal  fields  locally defined $Y_1, ...,Y_n$ in $\mathfrak{X}(B)$, $V_1,...V_m$ in $\mathfrak{X}(F)$ such that $g_B(Y_j,Y_j)=\epsilon_j h^2$ and $g_F(V_\alpha, V_\alpha)=\varepsilon_\alpha$. Then  
\[
\left\{
\begin{array}[pos]{l}
-\epsilon_j \nabla_B\nabla_Bh (Y_{j},Y_{j})+h|\nabla_B h|^2=ch+\tilde{c},\\\noalign{\smallskip}
h\Delta_B h +(m-1)|\nabla_{B}h|^{2}-h(\nabla_B h) \beta +
\epsilon_{j}[Ric_{B}+\nabla_{B}\nabla_{B}\beta -mh^{-1}\nabla_{B}\nabla_{B} h ](Y_{j},Y_{j})=-bh+\tilde{b},
\\\noalign{\smallskip}
\varepsilon_{\alpha}\nabla_{F}\nabla_{F}\varphi(V_{\alpha},V_{\alpha})=
-c\varphi-b\\\noalign{\smallskip}
-\varepsilon_{\alpha}Ric_{F}(V_{\alpha},V_{\alpha})=\tilde{c}\varphi-\tilde{b}.
\end{array}
\right.
\]
Considering (\ref{complement})  for  the  orthogonal vector fields 
$\{Y_j\}_{j=1}^n$ and $\{V_\alpha\}_{\alpha=1}^m$  it follows that (\ref{pre_system_withdependence2}) holds 
in $B\times F$.

\vspace{.1in}

We will now use Bochner formula (\ref{bochner}) to prove that  
\begin{equation}\label{dat}
\tilde{c}=0,\qquad \qquad \tilde{b}=(m-1)c.
\end{equation}
In fact, it follows from the third equation of (\ref{pre_system_withdependence2}) that 
\[
U_1(\Delta_F\varphi)=-cm U_1(\varphi).
\]
From  the fourth equation, we have 
\[
	Ric_{F}(\nabla_{F}\varphi,U_1)=(-\tilde{c}\varphi+\tilde{b})U_1(\varphi).
\]	
Moreover,  
	\[ 
	\begin{array}[pos]{lll}
	div(\nabla_{F}\nabla_{F}\varphi)(U_1)&=&\displaystyle\sum_{\alpha=1}^{m}(\nabla_{F_{\ U_{\alpha}}}\nabla_{F}\nabla_{F}\varphi)(U_1,U_{\alpha})\\\noalign{\smallskip}
	&=&\displaystyle\sum_{\alpha=1}^{m}(\nabla_{F_{\ U_{\alpha}}}(-(c\varphi+b)g_{F}))(U_1,U_{\alpha})\\\noalign{\smallskip}
	&=&\displaystyle-cg_{B}(U_1,\sum_{\alpha=1}^{m}U_{\alpha}(\varphi)U_{\alpha})\\\noalign{\smallskip}
	&=&-cU_1(\varphi).
	\end{array}
	\]
Now  Bochner formula implies that 
\[ \label{consequenceofbochner}
	[\tilde{c}\varphi-\tilde{b}+c(m-1)]U_1(\varphi)=0.
	\] 
Since $U_1(\varphi)\neq0$, we conclude that (\ref{dat}) holds.

Therefore, on $B\times F$,  the system (\ref{pre_system_withdependence2}) reduces to 
\begin{equation}\label{pre_system_withdependence3}
\left\{
\begin{array}[pos]{l}
\nabla_B\nabla_Bh +(c-|\nabla_Bh|^2 )h^{-1}g_B=0, \\ \noalign{\smallskip}
Ric_{B}+\nabla_{B}\nabla_{B}\beta+
\left\{h^{-1}\left[ \Delta_B h-(\nabla_Bh)\beta+b\right]+h^{-2}(c-|\nabla_{B}h|^{2})\right\}
g_B=0, \\ \noalign{\smallskip}
\nabla_{F}\nabla_{F}\varphi+(c\varphi +b)g_F=0,\\ \noalign{\smallskip}
Ric_F-(m-1)cg_F=0.
\end{array}
\right.
\end{equation}

Observe that for any $X\in \mathfrak{L}(B)$, we have the following expressions 
\[
\begin{array}{l}
\nabla_B\nabla_Bh(X,\nabla_BX)=g_{B}(\nabla_X\nabla_Bh,\nabla_Bh)=\frac{1}{2}X(|\nabla_Bh|^2), \\ \noalign{\smallskip}
\nabla_B\nabla_Bh(X,\nabla_BX)=\left( |\nabla_B h|^2-c\right)h^{-1}X(h), 
\end{array}
\]
where the second equality follows from (\ref{pre_system_withdependence3}). 
Therefore, 
\[
\frac{X(|\nabla_Bh|^2)}{2}-\left( |\nabla_B h|^2-c\right)h^{-1}X(h)=0, 
\]
which implies that 
\[
X\left[\left(c-|\nabla_Bh|^2\right)h^{-2}\right]=0.
\]
Hence there exists a constant $a$ such that 
\[
\left(c-|\nabla_Bh|^2\right)h^{-2}=a, 
\]
i.e., (\ref{relation}) holds.  Moreover, the first equation of (\ref{pre_system_withdependence3})
reduces to 
\[
\nabla_B\nabla_Bh+ahg_B=0 
\]
and $\Delta_B h=-anh$. Hence the second equation of (\ref{pre_system_withdependence3})
reduces to
\[
Ric_B+\nabla_B\nabla_B\beta=\left[(n-1)a+h^{-1}(\nabla_Bh)\beta-bh^{-1}\right]g_B.
\]
 Finally, it follows from these  two last equations that  the first equation   
 of (\ref{system_characterization}) provides   
\[
\lambda= h^{-1}(\nabla_Bh)\beta+(m+n-1)a-bh^{-1}-ah\varphi.
\]

Therefore, the functions $f,\,h$ and $\lambda$ satisfy the system (\ref{system_withdependence}). The converse is a straightforward 
computation. 
This concludes the proof of Theorem \ref{withdependence}. 
\end{proof}

\begin{proof}[\textbf{Proof of Corollary \ref{nondependenceontehfiberriem}}]
	Suppose by contradiction that $f$ depends on the fiber, then it follows from Theorem \ref{withdependence} that $f=\beta+h\varphi$ where $\varphi$ is not constant. Moreover, $\beta,\, h,\, \varphi$ and $\lambda$ satisfy
	(\ref{system_withdependence})-(\ref{relation}). Hence there exists a vector field $U\in\mathfrak{L}(F)$ such that $U(\varphi)\neq 0$ on an open subset of $F$. Since  $\lambda$ is constant, taking the derivative of (\ref{lambda}) with respect to $U$, we obtain
	$0=U(\lambda)=-ahU(\varphi)$. Hence $a=0$ and the first equation of (\ref{system_withdependence}) reduces to
	$\nabla_{B}\nabla_{B}h=0$. However, it follows  from Proposition   \ref{nondependenceontehfiber} that if $B\times_h F$ is complete then 
	$\nabla_Bh$ is not parallel, which is a contradiction.
	
\end{proof}

\begin{proof}[\textbf{Proof of Theorem \ref{withoutdependence}}]
It follows from Proposition \ref{decompose} that if $(B^n\times_hF^m, g,f,\lambda)$ is a Ricci almost soliton and $f$ is constant on $F$, then in the decomposition of $f$ given by (\ref{property}) we may consider $\varphi=0$. Therefore, from  the first equation  of (\ref{system_characterization}) we get that the first equation of (\ref{system_withoutdependence}) holds  and  that $\lambda$ is a function constant on $F$, hence it depends only on $B$. 
In order to obtain the other equations of 
(\ref{system_withoutdependence}), we 
 observe that if $U\in\mathfrak{L}(F)$ is a unitary vector field satisfying $g_{F}(U,U)=\epsilon\in\{-1,1\}$ we obtain from the second equation of 
 (\ref{system_characterization}) :
\begin{equation*}
\epsilon Ric_{F}(U,U)=h\Delta_{B}h+(m-1)|\nabla_{B}h|^{2}-h(\nabla_{B}h)\beta+\lambda h^{2}.
\end{equation*}
Since the left hand side is a function defined only on $F$ and the right hand side is a function defined only on $B$, there is a constant $\tilde{c}\in\mathbb{R}$ independent of the fixed field $U$, (as we can see using the right hand side of the above equality), such that
\begin{equation*}
\lambda h^{2}=h(\nabla_{B}h)\beta-(m-1)|\nabla_{B}h|^{2}-h\Delta_{B}h+\tilde{c},
\end{equation*}
and 
\begin{equation*}
Ric_{F}=\tilde{c}g_{F}.
\end{equation*}
In order to normalize the Einstein constant, we consider $\tilde{c}=(m-1)c$. This proves that (\ref{system_withoutdependence}) holds.  
The converse is a simple calculation. 
\end{proof}

\begin{proof}[\textbf{Proof of Theorem \ref{impropercase}}]
From Theorem \ref{withdependence}, we have that $f=\beta+h\varphi$ 
and equations (\ref{system_withdependence})-(\ref{relation}) are satisfied. If $\nabla_{B}h$ is an improper vector field on $B$, it follows from equation (\ref{relation})   that $a=c=0$. Hence, (\ref{system_withdependence}) and (\ref{lambda}) imply that 
  $\nabla_{B}h$ is a parallel light like vector field, $(F,g_{F})$ is Ricci flat and 
	\begin{equation}\label{h_propercase}
	\left\{ 
	\begin{array}[pos]{ll}
	Ric_{B}+\nabla_{B}\nabla_{B}\beta=\lambda g_{B}, \\\noalign{\smallskip}
	\lambda=h^{-1}[(\nabla_{B}h)\beta-b], \\\noalign{\smallskip}
	\nabla_{F}\nabla_{F}\varphi+bg_{F}=0. 
	\end{array}
	\right.
	\end{equation}
	Now we will prove that $\lambda$ is constant. If $\lambda=0$ there is nothing to prove.
	Otherwise there is an open set $U\subset M$ where $\lambda$ does not vanish. Then it follows from the second equation of (\ref{h_propercase}) that 
	\begin{equation}\label{lnlam}
	\begin{array}[pos]{lll}
	\displaystyle\frac{1}{2}X(\ln(\lambda^{2}))&=&\displaystyle\frac{1}{2}X(\ln(h^{-2}[g(\nabla_{B}h,\nabla_{B}\beta)-b]^{2})) \\\noalign{\smallskip}
	&=&-h^{-1}X(h)+[g(\nabla_{B}h,\nabla_{B}\beta)-b]^{-1}X(g(\nabla_{B}h,\nabla_{B}\beta)) \\\noalign{\smallskip}
	&=&-h^{-1}X(h)+[g(\nabla_{B}h,\nabla_{B}\beta)-b]^{-1}
	\nabla_{B}\nabla_{B}\beta(X,\nabla_B h).
		\end{array}
	\end{equation}
Since $\nabla_{B}h$ is a parallel vector field, Bochner's Formula implies that $Ric(X,\nabla_{B}h)=0$, hence from the first equation of (\ref{h_propercase}), we get that 
$\nabla_{B}\nabla_{B}\beta(X,\nabla_B h)	=\lambda g_B(X,\nabla_B h)$.
We conclude, using the second equation of (\ref{h_propercase}) that \eqref{lnlam} reduces to
\[
\frac{1}{2}X(\ln(\lambda^{2})) 	=-h^{-1}X(h)+h^{-1}X(h)=0,
\]
which proves that $\lambda$ is constant. The converse is immediate. 
	
	Now suppose that $(F,g_{F})$ is complete. Since $\nabla_{F}\nabla_{F}\varphi+bg_{F}=0$, the result follows from Theorem \ref{homothetic}  in the Appendix, if $b\neq0$ and from Theorem \ref{conformal2} if $b=0$. 
\end{proof}

\begin{proof}[\textbf{Proof of Theorem \ref{propercase}}]
If $(B^n\times_h F^m, g,f,\lambda)$ is a Ricci almost soliton with $h$ non constant and $f$ depending on the fiber then, it follows from  Theorem \ref{withdependence} that there are functions $\beta:B\rightarrow\mathbb{R}$ and  $\varphi:F\rightarrow\mathbb{R}$ and constants $a,b,c\in\mathbb{R}$, such that $f=\beta+h\varphi$ where $\beta,\, h,\, \varphi$ and $\lambda$ satisfy (\ref{system_withdependence})-(\ref{relation}).

If $\nabla_{B}h$ is a homothetic vector field, then $a=0$. It means that this vector field is parallel, and by the same argument as in the proof of Theorem \ref{impropercase}, we see that $\lambda$ is constant, which proves that $(B^n\times_h F^m, g,f,\lambda)$ is a Ricci soliton.

From now on we will suppose that $\nabla_{B}h$ is a non homothetic vector field, that is, that $a\neq0$.

If $n=1$, from (\ref{system_withdependence})-(\ref{relation}) we get  
	\begin{equation*}
	\left\{ 
	\begin{array}[pos]{ll}
	h''\pm ah=0,\\\noalign{\smallskip}
	\pm(h')^{2}+ah^{2}=c,\\\noalign{\smallskip}
	Ric_{F}=(m-1)cg_{F},
	\end{array}
	\right.
	\end{equation*}
where $g_{B^1}=\pm dt^2$. Therefore, $B^{1}\times_{h}F^{m}$ is an Einstein manifold with normalized Einstein constant $a$, as a consequence of Corollary \ref{einsteinonedimentional}.

If $n\geq2$, it follows from the second equation of (\ref{system_withdependence}) that $(B,g_{B},\beta,\bar{\lambda})$ is a Ricci almost soliton, i.e.,  
\begin{equation}\label{eqsolitontoB}
Ric_{B}+\nabla_{B}\nabla_{B}\beta=\bar{\lambda}g_{B}
\end{equation}
where 
\begin{equation}\label{expressiontolambdabar}
\bar{\lambda}=h^{-1}(\nabla_{B}h)\beta-bh^{-1}+(n-1)a.
\end{equation}
From the first equation of (\ref{system_withdependence}), we get that  
 $\nabla_{B}h$ is a gradient conformal field satisfying 
\begin{equation}\label{conformalgradh}
\nabla_{B}\nabla_{B}h+ahg_{B}=0, 
\end{equation}
i.e., $(B,g_B,\beta,\bar{\lambda})$ is a Ricci almost soliton. 
Moreover, $ \nabla_B\nabla_B h -\Delta_B h/n g=0$. 
 By hypothesis, $\nabla_{B}h$ is a non homothetic vector field hence 
  $\nabla_{B}h$ is a proper vector field, and therefore $a\neq 0$  
and $h$ admits regular points.  
  Fixing a regular point  of $h$, $p\in B$, 
   it follows from Proposition  \ref{localcharacterization}  
  that there exists  a connected open set $D\subset B$, containing $p$,  such that $D$ is   diffeomorphic to $(-\varepsilon,\varepsilon)\times N^{n-1}$
   for $\varepsilon>0$ and a regular level $N^{n-1}$ of $h$,  in such a way that $h$ does not depend on $N^{n-1}$ and $(D,g_{D})$ is isometric   to   $((-\varepsilon,\varepsilon)\times N^{n-1},\pm dt^{2}+h'(t)^{2}g_{N})$, where $g_{D}=g_{B}|_{D}$ and $g_{N}=g_{B}|_{N}$. By restricting $\beta$ and $\bar{\lambda}$ to $D$, we have that $(D,g_{D},\beta,\bar{\lambda})$ is a Ricci almost soliton, therefore
\begin{equation}\label{Dproduct}   
((-\varepsilon,\varepsilon)\times_{h'} N^{n-1},\pm dt^{2}+h'(t)^{2}g_{N},\beta,\bar{\lambda}),
\end{equation}
is also a Ricci almost soliton. We are going to use this coordinate system to conclude that $(D,g_{D})$ is an Einstein manifold with normalized Einstein constant $a$. This is equivalent to proving that  
following equations hold 
	\begin{equation}\label{conditiontobeeinstein}
	\left\{ 
	\begin{array}[pos]{ll}
	h'''\pm ah'=0,\\\noalign{\smallskip}
	\pm(h'')^{2}+a(h')^{2}=\bar{c},\\\noalign{\smallskip}
	Ric_{N}=(n-2)\bar{c}g_{N},  
	\end{array}
	\right.
	\end{equation}
as one can see from Corollary \ref{einsteinonedimentional}. 
In order to do so, we must consider two cases whether  $\beta$ depends  on $N^{n-1}$ or not. 

Suppose that $\beta$ depends on $N^{n-1}$, then we can apply Theorem \ref{withdependence}  to  (\ref{eqsolitontoB}), when  restricted to $D$,  given as in  \ref{Dproduct}. From the first and  fourth equations of (\ref{system_withdependence}) we get that the following equations hold 
	\begin{equation}\label{partbetadepend}
	\left\{ 
	\begin{array}[pos]{ll}
	h'''\pm\bar{a}h'=0,\\\noalign{\smallskip}
	Ric_{N}=(n-2)\bar{c}g_{N},
	\end{array}
	\right.
	\end{equation}
for some  constants $\bar{a},\bar{c}\in\mathbb{R}$. Moreover, from 
(\ref{relation})  the constants $\bar{a}$ and $\bar{c}$ are related to $h'$ by the equation $\pm(h'')^{2}+\bar{a}(h')^{2}=\bar{c}$.
It follows from the first equation of (\ref{system_withdependence})
and (\ref{partbetadepend}) 
that $a=\bar{a}$.  This proves (\ref{conditiontobeeinstein}) for this case.

Suppose that $\beta$ does not depend on $N^{n-1}$, 
 then since (\ref{eqsolitontoB}) holds, we can apply Theorem \ref{withoutdependence}  to $D$ given as in    (\ref{Dproduct}). 
Then  (\ref{system_withoutdependence}) reduces to
\begin{equation}\label{eqv} 
\begin{array}{l}
\beta''-(n-1)(h')^{-1}h'''=\pm \bar{\lambda},\\ \noalign{\smallskip}
(h')^2\bar{\lambda}=\pm h'h''\beta'\mp (n-2)(h'')^2\mp h'''+\bar{c}(n-2),\\
\noalign{\smallskip}
Ric_N=\bar{c}(n-2)g_N, 
\end{array}
\end{equation}
for some constant $\bar{c}\in \mathbb{R}$. 
Moreover, the first equation of 
 (\ref{system_withdependence}) restricted to $D$ gives $h''\pm ah=0$ and hence $h'''\pm ah'=0$. These two equations substituted into the first two equations of (\ref{eqv}) implies that 
\begin{equation} \label{eqIeII}
\begin{array}{l}
\beta''\pm(n-1)a=\pm\bar{\lambda}\\ \noalign{\smallskip}
(h')^2\bar{\lambda}=-a hh'\beta'\mp (n-2)a^2h^2+a(h')^2+\bar{c}(n-2).  
\end{array}
\end{equation} 
Substituting  (\ref{expressiontolambdabar}) into both equations of 
(\ref{eqIeII}), and using (\ref{relation}) we conclude that the following equations hold  
 	\begin{equation}\label{intermediateeinstein}
	\left\{ 
	\begin{array}[pos]{lll}
	(\beta'h^{-1})'=\mp bh^{-2},\\\noalign{\smallskip}
	c\beta'h^{-1}=bh^{-1}h'+(n-2)(\bar{c}\mp ac)(h')^{-1},\\\noalign{\smallskip}
	h'''\pm ah'=0,\\\noalign{\smallskip}
	\pm(h'')^{2}+a(h')^{2}=\pm ac,\\\noalign{\smallskip}
	Ric_{N}=\bar{c}(n-2)g_{N}
	\end{array}
	\right.
	\end{equation}
Therefore, in order to prove that (\ref{conditiontobeeinstein}) holds, we need to show the equality $\bar{c}=\pm ac$. If $c=0$ it follows from the second equation of (\ref{intermediateeinstein}) that $\bar{c}=0$. If  $c\neq0$, then we substitute the second equation of (\ref{intermediateeinstein}) into the first one  to obtain
	\begin{equation*}
	a(\bar{c}\mp ac)=0.
	\end{equation*}
which implies $\bar{c}=\pm ac$, since $a\neq0$. 
Therefore, we have proved that (\ref{conditiontobeeinstein}) also holds  when
$\beta$ does not depend on $N^{n-1}$.

Now from Proposition \ref{isoletedpoints},  we know that the set of regular points of $h$ is a dense subset of $B$, and the argument above implies that $(B,g_{B})$  is an Einstein manifold with normalized Einstein constant $a$. As a consequence we have
	\begin{equation}
	\left\{ 
	\begin{array}[pos]{lll}
	Ric_{B}=a(n-1)g_{B},\\\noalign{\smallskip}
	\nabla_{B}\nabla_{B}h+ahg_{B}=0,\\\noalign{\smallskip}
	Ric_{F}=c(m-1)g_{F},
	\end{array}
	\right.
	\end{equation}
which implies from Proposition \ref{einsteincondition} that $B\times_{h}F$ is itself an Einstein with normalized Einstein constant $a$.

From the fundamental equation (\ref{eqriccisoliton}), we obtain that
	\begin{equation*}
	\nabla\nabla f+((m+n-1)a-\lambda)g=0,
	\end{equation*}
and $\nabla f$ is a gradient conformal field on an Einstein manifold. Proposition \ref{otherprove} says that there is a constant $a_{0}$ such that
	\begin{equation}\label{lamf}
	\lambda=-af+a(m+n-1)+a_{0},
	\end{equation}
in view of $n+m\geq 2$. Hence $\nabla\nabla f+af-a_0=0$. Moreover, since 
$f$ is non constant on $F$, \eqref{lamf} implies  that $\lambda$ is not constant.  This concludes the proof of Theorem \ref{propercase}. 

\end{proof}

In order, to prove Theorem \ref{coroclassificationnew} that provides the classification of complete Ricci almost solitons whose potential function depends on the fiber, we will use the classification of Einstein manifolds carrying conformal vector fields, available in the Appendix, Theorem \ref{zeros12}, Theorem \ref{conformal3new} and Theorem \ref{homothetic}

\begin{proof}[\textbf{Proof of Theorem \ref{coroclassificationnew}}]
Suppose that $(B^{n}\times_{h}F^{m},g,f,\lambda)$ is a complete Ricci almost soliton, with $h$ non constant and $f$ depending on $F$. Then it follows from Theorem \ref{withdependence} that  there are functions $\beta:B\rightarrow\mathbb{R}$ and  $\varphi:F\rightarrow\mathbb{R}$ and constants $a,b,c\in\mathbb{R}$ such that $f=\beta+h\varphi$, where \ $\beta,\, h,\, \varphi$ and $\lambda$ satisfy (\ref{system_withdependence})-(\ref{relation}). From  Proposition \ref{nondependenceontehfiber} the completeness of $B^{n}\times_{h}F^{m}$ implies that $\nabla_{B}h$  is not a parallel vector field on $B$ and hence 
it follows from the first equation of (\ref{system_withdependence}) that 
 $a\neq 0$, therefore $\nabla_B h$ is not homothetic. 
  Applying Theorem \ref{propercase} we have that $B^{n}\times_{h}F^{m}$, $B$ and $F$  are Einstein manifolds satisfying (\ref{RicWarpBF}) for constants $a\neq 0$ and $c,a_0\in\mathbb{R}$, $\nabla_{B}h$, $\nabla_{F}\varphi$ and $\nabla f$ are conformal  vector fields satisfying (\ref{conformalfh}) and $\lambda$ is given by (\ref{lambdaT26}).

 If $n=1$ then $g_B=\pm dt^2$ and from the first equation of (\ref{system_withdependence}) and (\ref{relation}) we have that $h''\pm a h=0$  and $\pm (h')^2+ a h^2=c$ . Since $B$ is not compact it follows that  $B^1=\mathbb{R}$ and the non vanishing of $h$  
implies that  $\pm a<0$.  Therefore $h$ satisfies 
\[
\begin{array}{l}
h''-|a|h=0,\\ \noalign{\smallskip}
(h')^2-|a|h^2=\pm c, 
\end{array}
\]
and hence (\ref{hc}) holds i.e.
\[
h=\left\{ 
\begin{array}{lll}
Ae^{\sqrt{|a|}t} & \mbox{ if } & c=0,\\ \noalign{\smallskip}
\sqrt{|\frac{c}{a}|}[ \cosh(\sqrt{|a|}t+ \theta)] & \mbox{ if } & c\neq 0,
\end{array} 
 \right.
 \]
where $A\neq 0$ and $\theta\in \mathbb{R}$.

If $n\geq2$ and $m\geq2$, it follows that $B^{n}$ and $F^{m}$ are complete  Einstein manifolds satisfying (\ref{RicWarpBF}).  

Since $f$ satisfies the first equation of (\ref{conformalfh}) it follows that $\tilde{f}=f-a_0/a$ is a solution of $\nabla\nabla \tilde{f}+a \tilde{f}g=0$, therefore from Theorem \ref{conformal3new} we conclude that 
when $f$ has some critical point then 
$B\times_hF$ is  isometric  to a manifold of Class II 1 (resp Class II 2) when $a>0$  (resp. $a<0$)  
and $f$ is a height function on $S^n_\varepsilon(1/\sqrt{a})$ (resp. $H^n_\varepsilon(1/\sqrt{|a|})$ ( see Examples \ref{Pseudospheres} and 
\ref{Pseudohyperbolicspace}); when $f$ has no critical points then 
$B\times_hF$ is  isometric to a manifold of Class II 3 or 4.    

Since $h$ satisfies the second equation of (\ref{conformalfh}) then 
it follows from Theorem \ref{conformal3new} that if $h$ has no critical points then $B$ is isometric to one of the manifolds of Class II 3 or 4 and 
if $h$ has some critical point then $B$ is isometric to a manifold of Class 
II 1 or 2 according to the sign of $a$ moreover, $h$ is a height function. However, since $h$ does not vanish it induces a restriction on  the index of $B$, in fact, it follows from Proposition \ref{zeros12} that when 
$a>0$ (resp. $a<0$) $ B$ is isometric to $\mathbb{S}^n_n(1/\sqrt{a})$ 
(resp. $\mathbb{H}^n_1(1/\sqrt{|a|})$.     
  
Since $\varphi$ satisfies the third equation of (\ref{system_withdependence}), i.e. $\nabla_F\nabla_F \varphi +(c\varphi+b)g_F=0$, it follows from Theorem \ref{homothetic} that  if $c=0$ and $b\neq 0$, then 
$F$ is isometric to a semi Eulidean space $\mathbb{R}^m_\varepsilon$. 
If $c=b=0$ then Theorem \ref{conformal2} implies  that $F$ is isometric to a manifold of Class I. Finally, if $c\neq 0$ then Theorem \ref{conformal3new} implies that $F$ is isometric to a manifold of Class II 
1 (resp. Class II 2)  when $c>0$  (resp. $c<0$)  and $\varphi$ has  some critical point  while $F$ is isometric to a manifold of Class II 3 or 4 when 
$\varphi$ has no critical points.

We conclude by observing that, since $B\times F$ is complete,  in order to avoid the phenomena of Been-Buseman example  one must have  $F^m$, $m\geq 1$  positive definite (resp. negative definite) if $B$ is positive definite (resp. negative definite).  
\end{proof}

\section{Appendix: Conformal Fields}\label{conformalfields}

Let $(M^n,g)$ be a semi-Riemannian manifold of dimension  $n\geq 2$.  For a pair of constants $b,c\in\mathbb{R}$, we consider the set $ SC(M,c,b)$ of functions  $\varphi:M\rightarrow\mathbb{R}$ that satisfy  \begin{equation}\label{typeobataequation}  
\nabla\nabla\varphi+(c\varphi+b)g=0.
\end{equation} 

The vector field $\nabla\varphi$ of a smooth function $\varphi\in SC(M,c,b)$ is said to be {\it conformal}. If $c\neq0$ we can assume that $b=0$ replacing $\varphi$ by $\varphi-b/c$. In this case we denote the vector space $SC(M,c,b)$ by $SC(M,c)$.
Equation (\ref{typeobataequation}) has been largely studied since 1920. It started with Brinkman's work \cite{brinkmann} on conformal transformations between semi-Riemannian Einstein manifolds.

Before stating some classification results for complete manifolds that admit non constant solutions to  equation (\ref{typeobataequation}), we will present examples of spaces carrying such solutions. In this section, we are following  the notation used in \cite{oneill}.
\begin{example}\label{Semieuclideanspace}{\rm
		Let $\mathbb{R}^{n}_{\varepsilon}$ be the linear space $\mathbb{R}^{n}$ with the semi-Riemannian metric of index $\xi$ 
		\begin{equation*}
		\langle v,w\rangle_{\varepsilon}=\displaystyle\sum_{j=1}^{n}\varepsilon_{j}v_{j}w_{j}.
		\end{equation*}
		If $\varphi$ is a non constant solution of (\ref{typeobataequation}), then   a straightforward calculation shows that $c$ must be zero and that, for all $b\in\mathbb{R}$, a generic solution to (\ref{typeobataequation}) in $\mathbb{R}^{n}_{\varepsilon}$ is given by
		\begin{equation}\label{expression}
		\begin{array}[pos]{lll}
		\varphi(x_{1},\ldots,x_{n}) & = & -(b/2)\sum_{j=1}^{n}\varepsilon_{j}x_{j}^{2}+\langle A_{\varepsilon},x\rangle_{\varepsilon}+A_{n+1}
		\end{array}
		\end{equation}
		where $A_{\varepsilon}=(\varepsilon_{1}A_{1},\dots,\varepsilon_{n}A_{n})\in\mathbb{R}^{n}_{\varepsilon}$
		and $A_{n+1}\in \mathbb{R}$. So $\dim{(SC(\mathbb{R}^{n}_{\varepsilon},0,b))}=n+1$.
	}
\end{example}

\begin{example}\label{Pseudospheres}{\rm
		The pseudosphere \cite{oneill}, with dimension $n$ and index $\varepsilon$, is defined as
		\begin{equation*}
		\mathbb{S}^{n}_{\varepsilon}(1/\sqrt{c})=\{x\in\mathbb{R}^{n+1}_{\varepsilon};
		\langle x,x\rangle_{\varepsilon}=1/c\},\qquad \mbox{ where } c>0.
		\end{equation*} 
		It is connected if, and only if, $0\leq\varepsilon\leq n-1$ and simply connected if, and only if, $0\leq\varepsilon\leq n-2$. Furthermore, each connected component of \ $\mathbb{S}^{n}_{\varepsilon}(1/\sqrt{c})$ is a complete semi-Riemannian manifold of dimension $n$, index $\varepsilon$ and constant curvature $c$. It is not difficult to see that the functions in $SC(\mathbb{R}^{n+1}_{\varepsilon},0,0)$ with $A_{n+2}=0$ in the expression (\ref{expression}) i.e.,  $\varphi_{A_{\varepsilon}}(x)=\langle A_{\varepsilon},x\rangle_{\varepsilon}$,    provide all the functions in  $SC(\mathbb{S}^{n}_{\varepsilon}(1/\sqrt{c}),c)$. Hence $\dim{(SC(\mathbb{S}^{n}_{\varepsilon}(1/\sqrt{c}),c))}=n+1$. Note that $\varphi_{A_{\varepsilon}}(x)=\langle A_{\varepsilon},x\rangle_{\varepsilon}$ is the height function with respect to $A_{\varepsilon}$ on the pseudosphere. .
}\end{example}

\begin{example}\label{Pseudohyperbolicspace}{\rm
		Similarly to the example above, the pseudohyperbolic space \cite{oneill}, with dimension $n$ and index $\varepsilon$,  is defined as
		\begin{equation*}
		\mathbb{H}^{n}_{\varepsilon}(1/\sqrt{-c})=\{x\in\mathbb{R}^{n+1}_{\varepsilon+1};
		\langle x,x\rangle_{\varepsilon+1}=1/c\},\qquad \mbox{ where } c< 0.
		\end{equation*}
		It is connected if, and only if, $2\leq\varepsilon\leq n$ and simply connected if and only if $1\leq\varepsilon\leq n-2$. Furthermore each connected component of \ $\mathbb{H}^{n}_{\varepsilon}(1/\sqrt{-c})$ is a complete semi-Riemannian manifold of dimension $n$, index $\varepsilon$ and constant curvature  $c$. As in the previous example, the functions in $SC(\mathbb{R}^{n+1}_{\varepsilon+1},0,0)$ with $A_{n+2}=0$ in the expression (\ref{expression}) i.e., $\varphi_{A_{\varepsilon+1}}(x)=\langle A_{\varepsilon+1},x\rangle_{\varepsilon+1}$  provide all the functions in  $SC(\mathbb{H}^{n}_{\varepsilon}(1/\sqrt{-c}),c)$ and hence $\dim{(SC(\mathbb{H}^{n}_{\varepsilon}(1/\sqrt{-c}),c))}=n+1$.  Note that $\varphi_{A_{\varepsilon+1}}(x)=\langle A_{\varepsilon+1},x\rangle_{\varepsilon+1}$ is the  height function with respect to $A_{\varepsilon+1}$ on the pseudohyperbolic space.
	}
\end{example}

\begin{example}\label{Warpedproducts}
	{\rm
		Let $\pm I\times_{h}N^{n-1}$ be a warped product manifold, where $I\subset\mathbb{R}$ is a connected interval and $N^{n-1}$ is an arbitrary Riemannian or semi-Riemannian manifold. Then  a simple calculation shows that the function
		\begin{equation*}
		\varphi(s,p)=\int_{s_{0}}^{s}h(t)dt
		\end{equation*}
		solves equation (\ref{typeobataequation}), when $h$ satisfies 
		\begin{equation}\label{simpleeq}
		h''\pm ch=0.
		\end{equation}
		Hence,  $\dim{(SC(\mathbb{H}^{n}_{\varepsilon}),c,b)}\geq1$, if (\ref{simpleeq}) holds.
	}
\end{example}

\vspace{.1in}

The following Theorems \ref{homothetic}-\ref{conformal3new} are of fundamental importance in the proofs of Section \ref{Proofofthemainresults}. They provide the classification results of complete semi-Riemannian Einstein manifolds, for which the set of functions $SC(M,c,b)$ is not empty, for some constants $c$ and $b$. These theorems assert the uniqueness of the examples given above, when $\nabla\varphi$ is proper. The improper case was analized  by Brinkman  \cite{brinkmann} showing, among other things, that  $\nabla\varphi$ must be parallel. Since then spaces carrying parallel improper vector fields are called Brinkman spaces.

\begin{theorem}[\cite{kerbrat}]\label{homothetic}
	A complete semi-Riemannian manifold,  $(M^{n},g)$, with $n\geq2$, admits a non constant solution of the  equation 
	$\nabla \nabla \varphi+bg=0$ 	 
	$b\neq0$ if, and only if, it is isometric to the semi-Euclidean space $\mathbb{R}^{n}_{\varepsilon}$.
\end{theorem}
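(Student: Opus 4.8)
The reverse (``if'') direction is immediate from Example~\ref{Semieuclideanspace}: on $\mathbb{R}^{n}_{\varepsilon}$ the function $\varphi(x)=-\tfrac{b}{2}\sum_{j}\varepsilon_{j}x_{j}^{2}$ has Hessian $-bg$, and since $b\neq0$ it is non-constant. So the whole content is the forward direction, and the plan is to show that the exponential map at a well-chosen point realizes an isometry of $\mathbb{R}^{n}_{\varepsilon}$ onto $M$. I would first reformulate the hypothesis: writing $P:=-b^{-1}\nabla\varphi$, the equation $\nabla\nabla\varphi+bg=0$ is equivalent to $\nabla_{X}P=X$ for every vector field $X$ (so $P$ is a \emph{concurrent} field). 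Two consequences follow by direct computation. Using torsion-freeness,
\begin{equation*}
R(X,Y)P=\nabla_{X}\nabla_{Y}P-\nabla_{Y}\nabla_{X}P-\nabla_{[X,Y]}P=\nabla_{X}Y-\nabla_{Y}X-[X,Y]=0
\end{equation*}
for all $X,Y$; and along any geodesic $\gamma$ one has $\tfrac{D}{dt}\nabla\varphi=\nabla_{\gamma'}\nabla\varphi=-b\gamma'$.

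Next I would produce a critical point of $\varphi$. Since $\varphi$ is non-constant, choose $p$ with $\nabla\varphi(p)\neq0$ and let $\gamma$ be the geodesic with $\gamma(0)=p$ and $\gamma'(0)=\nabla\varphi(p)$; by completeness $\gamma$ is defined on all of $\mathbb{R}$. As $\gamma'$ is parallel and $\tfrac{D}{dt}\nabla\varphi=-b\gamma'$, the field $W(t):=\nabla\varphi(\gamma(t))$ satisfies $W(t)=(1-bt)\gamma'(t)$, which vanishes at $t=1/b$. Hence $o:=\gamma(1/b)$ is a critical point of $\varphi$; note that both $b\neq0$ and completeness are essential here, and that this step avoids the Riemannian argument by maximization, which has no direct analogue in indefinite signature.

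The core step is to show that $\exp_{o}\colon T_{o}M\to M$, defined on all of $T_{o}M$ by completeness, is a local isometry for the flat metric $g_{o}$. Along a geodesic $\gamma_{v}(t)=\exp_{o}(tv)$ the same integration, now with $\nabla\varphi(o)=0$, gives $\nabla\varphi(\gamma_{v}(t))=-bt\,\gamma_{v}'(t)$, so $\gamma_{v}'$ is proportional to $P$ for $t\neq0$. Combined with $R(X,Y)P=0$ this yields $R(\,\cdot\,,\gamma_{v}')\gamma_{v}'=0$, so the Jacobi equation along $\gamma_{v}$ collapses to $J''=0$. Every Jacobi field with $J(0)=0$ is therefore $J(t)=tE(t)$ with $E$ parallel and $E(0)=J'(0)$, and the standard identity $(d\exp_{o})_{tv}(w)=t^{-1}J(t)=E(t)$ shows that $(d\exp_{o})_{tv}$ is parallel transport, hence a linear isometry. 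Thus $\exp_{o}$ is a local isometry of $(T_{o}M,g_{o})=\mathbb{R}^{n}_{\varepsilon}$ onto $M$, and in particular $M$ is flat.

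Finally I would upgrade this to a global isometry. A local isometry out of a complete semi-Riemannian manifold is a covering map, so $\exp_{o}$ is one; its deck transformations $\psi$ satisfy $\exp_{o}\circ\psi=\exp_{o}$ and therefore preserve $\tilde\varphi:=\exp_{o}^{*}\varphi=\varphi(o)-\tfrac{b}{2}\,g_{o}(\,\cdot\,,\cdot\,)$, whose unique critical point is $0$. Hence every $\psi$ fixes $0$, and since nontrivial deck transformations act freely, the deck group is trivial, making $\exp_{o}$ a diffeomorphism and an isometry $M\cong\mathbb{R}^{n}_{\varepsilon}$. The main obstacle is this analytic core together with the last step: running the Jacobi-field argument entirely through $\exp_{o}$ and parallel transport (no distance function or metric balls are available in the semi-Riemannian setting), and then invoking the completeness-to-covering theorem and eliminating the deck group. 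The two facts that make everything collapse are the global identity $R(X,Y)P=0$ and the radiality $\nabla\varphi=-bt\,\gamma_{v}'$ emanating from the critical point.
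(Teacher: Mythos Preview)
The paper does not supply its own proof of this statement: immediately after the theorem it remarks that ``this result is a particular case of a theorem proved by Kerbrat'' and simply defers to \cite{kerbrat}. So there is nothing in the paper to compare your argument against.

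That said, your proof is correct and is essentially the classical one. The chain---rewriting the hypothesis as $\nabla_XP=X$ for $P=-b^{-1}\nabla\varphi$, deducing $R(X,Y)P=0$, locating a critical point $o$ by integrating $\tfrac{D}{dt}\nabla\varphi=-b\gamma'$ along the geodesic tangent to $\nabla\varphi$, and then showing that the Jacobi equation along every radial geodesic from $o$ collapses to $J''=0$ so that $\exp_o$ is a local isometry---is sound. Two small points are worth making explicit. First, passing from $R(X,Y)P=0$ to $R(\,\cdot\,,\gamma_v')\gamma_v'=0$ uses the curvature symmetry $g(R(X,Y)Z,W)=g(R(Z,W)X,Y)$, which gives $R(P,\,\cdot\,)\,\cdot=0$ and hence the conclusion once $P\propto\gamma_v'$. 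Second, the step ``a local isometry out of a complete semi-Riemannian manifold is a covering map'' is not automatic in indefinite signature but is indeed a theorem (O'Neill \cite{oneill}, Chapter~7). Your deck-group argument then finishes the job because $T_oM$ is simply connected, so $\exp_o$ is the universal cover and triviality of the deck group forces the fibers to be singletons.
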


This result is a particular case of a theorem proved by Kerbrat \cite{kerbrat}, where the author classifies  spaces carrying vector fields satisfying more general equations.

\begin{theorem}\label{conformal2}
	A complete semi-Riemannian Einstein manifold,  $(M^{n},g)$, with $n\geq2$, admits a non constant solution $\varphi$ of the equation $\nabla\nabla\varphi=0$
	if, and only if, it is isometric to
	\begin{enumerate}
		\item \cite{kerbrat} $\mathbb{R}\times N^{n-1}$, where $(N,g_{N})$ is a complete semi-Riemannian Einstein manifold, if $\nabla\varphi$ is a proper vector field 
		\item \cite{brinkmann} a Brinkman space, if $\nabla\varphi$ is an improper vector field and $n\geq3$. 
	\end{enumerate}
\end{theorem}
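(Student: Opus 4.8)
The plan is to translate the equation $\nabla\nabla\varphi=0$ into the statement that $X:=\nabla\varphi$ is a parallel vector field and then to exploit the rigidity this imposes together with the Einstein condition. First I would record three immediate consequences of $\nabla X=0$: the function $g(X,X)$ is constant (since $Y\,g(X,X)=2g(\nabla_Y X,X)=0$); the field $X$ is Killing (because $\mathcal{L}_X g(Y,Z)=g(\nabla_Y X,Z)+g(Y,\nabla_Z X)=0$); and $R(Y,Z)X=0$ for all $Y,Z$, whence $Ric(X,\cdot)=0$. Combining the last fact with the Einstein condition $Ric=a\,g$ yields $a\,g(X,X)=Ric(X,X)=0$, so in the \emph{proper} case, where $g(X,X)\neq 0$, the Einstein constant must vanish and $M$ is Ricci flat.

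The dichotomy in the statement is precisely $g(X,X)\neq 0$ versus $g(X,X)=0$, which is a clean split because $g(X,X)$ is a constant. The improper case is then almost immediate: here $g(X,X)=0$ identically, so $X$ is a parallel light like field and $M$ is by definition a Brinkmann space; the restriction $n\geq 3$ excludes the degenerate two dimensional situation, in which $X^\perp$ coincides with the null line $\mathbb{R}X$ itself.

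The substance of the proof is the proper case, and my approach is to produce the splitting directly from the flow of $X$ rather than invoking a de Rham--Wu type theorem. Normalizing $g(X,X)=\varepsilon=\pm 1$ by rescaling $\varphi$, I note that the integral curves of $X$ are geodesics ($\nabla_X X=0$), so completeness of $M$ makes the flow $\phi_t$ of $X$ defined for all $t\in\mathbb{R}$. Since $\tfrac{d}{dt}\varphi(\phi_t(p))=g(\nabla\varphi,\nabla\varphi)=\varepsilon$, the value of $\varphi$ increases affinely along flow lines; hence $0$ is a regular value and $N:=\varphi^{-1}(0)$ is a connected hypersurface. The map $F(t,p)=\phi_t(p)$ is then a diffeomorphism $\mathbb{R}\times N\to M$ with smooth inverse $q\mapsto(\varepsilon\varphi(q),\phi_{-\varepsilon\varphi(q)}(q))$. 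Because $\phi_t$ is an isometry fixing $X$ and $T_pN=X^\perp$, one checks $F^*g=\varepsilon\,dt^2+g_N$, a genuine (unwarped) product; splitting of geodesics then gives completeness of $N$, while the product formula $Ric_M=0\oplus Ric_N$ together with Ricci flatness of $M$ forces $Ric_N=0$, so $N$ is a complete Einstein (indeed Ricci flat) manifold. The converse directions are routine: on $\mathbb{R}\times N$ the coordinate $\varphi=t$ has parallel gradient, and on a Brinkmann space the parallel null one form $X^\flat$ is closed, hence locally $d\varphi$, yielding the required improper solution.

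The main obstacle I anticipate is making the global product structure airtight in the semi-Riemannian setting: one must verify that $F$ is a genuine diffeomorphism onto all of $M$ (using completeness to guarantee the flow is complete, and using the affine behavior of $\varphi$ along flow lines to invert $F$) and that the induced metric is a product with \emph{no} warping factor. This last point is exactly where the hypothesis $\nabla\nabla\varphi=0$ is essential, since a nonzero right hand side of the form $(c\varphi+b)g$ would force a nontrivial profile along the $\mathbb{R}$ factor, as in Theorem \ref{homothetic}. A secondary subtlety is the passage from a closed parallel one form to a globally defined potential in the Brinkmann case, which requires triviality of its de Rham class; this is handled by passing to the universal cover or by taking $\varphi$ locally, since the statement only asserts existence of a solution.
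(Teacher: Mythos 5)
Your proposal is correct, but there is nothing in the paper to compare it against: Theorem \ref{conformal2} is quoted from Brinkmann and Kerbrat as a known classification result and is never proved internally. What you give is a self-contained proof, and it holds up. The chain of observations --- $\nabla\nabla\varphi=0$ makes $X=\nabla\varphi$ parallel, hence nowhere vanishing, with $g(X,X)$ constant and $Ric(X,\cdot)=0$, so the proper/improper dichotomy is global and the Einstein constant vanishes --- is sound, and the splitting in the proper case via the complete geodesic flow of $X$ (the map $F(t,p)=\phi_t(p)$, inverted using the affine behaviour of $\varphi$ along flow lines, with $F^{*}g=\varepsilon\,dt^{2}+g_{N}$ because $\phi_t$ is an isometry fixing $X$ and $T_pN=X^{\perp}$ is nondegenerate) is exactly where properness enters and is carried out correctly; the improper half of the forward implication is, as you say, definitional. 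This is a genuinely more elementary route than invoking the general Kerbrat/K\"uhnel machinery the paper leans on for the companion results: it buys a transparent, flow-theoretic proof at the cost of covering only the $c=b=0$ case of equation (\ref{typeobataequation}).

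Two caveats, neither fatal. In the Brinkmann converse the parallel null one-form $X^{\flat}$ is closed but need not be exact on $M$, so a global potential $\varphi$ may only exist on the universal cover; you flag this honestly, and since the paper only ever uses the ``only if'' direction of item 2 (in Theorems \ref{impropercase} and \ref{coroclassificationnew}) the gap is harmless, but strictly the ``if'' direction requires exactness or simple connectedness. Also, your computation in fact gives $a\,g(X,W)=Ric(X,W)=0$ for every $W$, so nondegeneracy of $g$ forces $a=0$ in the improper case as well, not just the proper one; relatedly, the converse of item 1 is only consistent because $\mathbb{R}\times N$ is Einstein precisely when $N$ is Ricci flat --- a looseness in the statement itself rather than in your argument.
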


The theorem bellow is a compilation of the classification of Einstein manifolds carrying non-homothetic conformal fields. The Riemannian case was settled essentially by Obata \cite{obata} and \cite{kanai}, while the case with positive signature was handled by Kerbrat \cite{kerbrat}.

\begin{theorem}\label{conformal3new}
	A complete semi-Riemannian Einstein manifold,  $(M^{n},g)$,  with $n\geq2$  and index $\varepsilon$,  admits a non constant solution 
	of the equation $\nabla\nabla \varphi +c\varphi g=0$	
	with $c\neq 0$ if, and only if, it is isometric to
	\begin{enumerate}
		\item $\mathbb{S}^{n}_{\varepsilon}(1/\sqrt{c})$,  when $0\leq\varepsilon\leq n-2$; the covering of $\mathbb{S}^{n}_{n-1}(1/\sqrt{c})$ when $\varepsilon=n-1$ and  the upper part of $\mathbb{S}^{n}_{n}(1/\sqrt{c})$ when $\varepsilon=n$ if $c>0$ and 
		$\varphi$ has some critical point 
		
		\item  $\mathbb{H}^{n}_{\varepsilon}(1/\sqrt{|c|})$, when  $2\leq\varepsilon\leq n-1$;  the  covering of $\mathbb{H}^{n}_{1}(1/\sqrt{|c|})$ when $\varepsilon=1$ and the upper part of $\mathbb{H}^{n}_{0}(1/\sqrt{|c|})$ when $\varepsilon=0\,$ ,  \, if $c<0$ and  $\varphi$ has some critical point; 
		\item $(\mathbb{R}\times N^{n-1},\pm dt^{2}+\cosh^{2}(\sqrt{|c|}\,t)g_{N})$, where  $(N^{n-1},g_{N})$ is a semi-Riemannian Einstein manifold, if $\varphi$ has no critical points 
		\item $(\mathbb{R}\times N^{n-1},\pm dt^{2}\pm e^{2\sqrt{|c|}\,t}g_{N})$, where  $(N^{n-1},g_{N})$ is a Riemannian Einstein manifold, if $\varphi$ has no critical points
	\end{enumerate}
\end{theorem}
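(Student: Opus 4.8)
The plan is to prove the equivalence by establishing the substantive ``only if'' direction through the concircular (closed conformal) structure of $\nabla\varphi$; the ``if'' direction is essentially contained in the examples. Indeed, the height functions $\varphi_{A_\varepsilon}(x)=\langle A_\varepsilon,x\rangle_\varepsilon$ on $\mathbb{S}^{n}_{\varepsilon}(1/\sqrt c)$ and $\mathbb{H}^{n}_{\varepsilon}(1/\sqrt{|c|})$ (Examples \ref{Pseudospheres} and \ref{Pseudohyperbolicspace}) and the primitive $\int h\,dt$ on the warped models (Example \ref{Warpedproducts}) solve $\nabla\nabla\varphi+c\varphi g=0$, while completeness and the Einstein property hold because the first two models have constant curvature $c$ and the two warped models are Einstein by Corollary \ref{einsteinonedimentional} whenever $g_N$ is Einstein of the matching constant.

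For the ``only if'' direction I would first record two consequences of the equation. Differentiating $|\nabla\varphi|^{2}$ in an arbitrary direction $X$ and using $\nabla_X\nabla\varphi=-c\varphi X$ gives $X\big(|\nabla\varphi|^{2}+c\varphi^{2}\big)=0$, so there is a constant $k$ with $|\nabla\varphi|^{2}+c\varphi^{2}=k$. Moreover $\nabla\varphi$ is a closed conformal field, i.e. $\nabla_X\nabla\varphi=-c\varphi X$ for all $X$, which is precisely the concircular condition. Since $c\neq0$, an improper $\nabla\varphi$ would, by Brinkmann's analysis recalled just before Theorem \ref{homothetic}, have to be parallel, forcing $c\varphi\equiv0$ and hence $\varphi$ constant; so under our hypotheses $\nabla\varphi$ is proper and $|\nabla\varphi|^{2}$ is nonzero, with a well-defined causal sign $\epsilon=\pm1$, on an open dense set.

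Next I would extract the local warped-product structure on $\Omega=\{\nabla\varphi\neq0\}$. A direct computation from the concircular condition shows that the unit field $U=\nabla\varphi/|\nabla\varphi|$ is geodesic and that the regular level sets of $\varphi$ are totally umbilic with umbilicity factor $-c\varphi/|\nabla\varphi|$. Parametrizing by arclength $s$ along the geodesics tangent to $U$ then yields, on a tube around a regular level set $N$, a metric $g=\epsilon\,ds^{2}+\rho(s)^{2}g_{N}$. Restricting the equation to $U$ gives $\varphi''+c\epsilon\varphi=0$, while matching the umbilicity factor with the warped-product shape operator gives $\rho'/\rho=|\nabla\varphi|'/|\nabla\varphi|$, so $\rho$ is a constant multiple of $|\nabla\varphi|$ and in particular satisfies $\rho''+c\epsilon\rho=0$. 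Invoking Corollary \ref{einsteinonedimentional}, the Einstein hypothesis forces $(N,g_N)$ to be Einstein and imposes $\rho''\pm a\rho=0$ for the Einstein constant $a$ of $M$; comparing the two equations determines $a=\pm c$. Solving $\varphi''+c\epsilon\varphi=0$ then produces a trigonometric profile when $c\epsilon>0$ and a $\cosh/\sinh/e^{\pm\sqrt{|c|}\,s}$ profile when $c\epsilon<0$, which is exactly the dichotomy between the (pseudo)sphere/pseudohyperbolic models and the two warped models in the statement.

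Finally I would globalize using completeness. If $\varphi$ has a critical point $p_0$, then $\nabla\varphi(p_0)=0$, the constant-energy relation gives $k=c\varphi(p_0)^{2}$, and the umbilic level sets collapse to $p_0$; a standard smoothness-at-the-pole argument together with completeness closes the tube into a rotationally symmetric complete space form, namely $\mathbb{S}^{n}_{\varepsilon}(1/\sqrt c)$ (or its covering, or its upper part) when $c>0$ and $\mathbb{H}^{n}_{\varepsilon}(1/\sqrt{|c|})$ (likewise) when $c<0$, the precise model in each range of $\varepsilon$ being dictated by the connectedness and simple-connectedness bookkeeping of Examples \ref{Pseudospheres} and \ref{Pseudohyperbolicspace}. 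If $\varphi$ has no critical point, completeness forces $s$ to range over all of $\mathbb{R}$, the warped product $g=\epsilon\,ds^{2}+\rho^{2}g_N$ is already global, and one reads off the $\cosh$ model or, when $g_N$ is positive definite, the exponential model; the positive-definiteness in the last case is essential for completeness, as the Been--Busemann example recalled after Proposition \ref{completeriemannian} shows that the exponential model is incomplete for indefinite $g_N$. I expect this global step to be the main obstacle: controlling the behaviour at the poles so the local warped tube caps off smoothly to a complete space form, and carrying out the semi-Riemannian index analysis separating the connected pseudosphere, its covering, and its ``upper part'' across the ranges of $\varepsilon$ is where the indefinite signature genuinely complicates the classical Obata--Tashiro argument.
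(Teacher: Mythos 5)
First, a point of comparison that matters: the paper does not prove Theorem \ref{conformal3new} at all. It is quoted as a known classification, attributed to Obata, Kanai and (for the indefinite case, which is the one that matters here) Kerbrat, and then used as a black box in the proofs of Theorems \ref{propercase} and \ref{coroclassificationnew}. So your attempt can only be measured against that literature. Within your attempt, the local analysis is correct and standard: the first integral $g(\nabla\varphi,\nabla\varphi)+c\varphi^{2}=k$, the exclusion of the improper case, the geodesic unit field $U=\nabla\varphi/|\nabla\varphi|$, the umbilic level sets, the local normal form $\epsilon\,ds^{2}+\rho(s)^{2}g_{N}$ with $\rho\propto\varphi'$ and $\varphi''+c\epsilon\varphi=0$, and the matching of Einstein constants via Corollary \ref{einsteinonedimentional}; this is essentially Proposition \ref{localcharacterization}.

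The genuine gap is the global step, and it is more serious than the deferred ``main obstacle'' you describe: the globalization you sketch is the Riemannian Obata--Tashiro argument, and it fails as stated in indefinite signature. Concretely: (i) the causal sign $\epsilon$ is \emph{not} well defined on the dense set $\{g(\nabla\varphi,\nabla\varphi)\neq0\}$, because $g(\nabla\varphi,\nabla\varphi)=k-c\varphi^{2}$ changes sign in general. For example, on de Sitter space $\mathbb{S}^{n}_{1}(1)$ with $n\geq3$ (a manifold of item 1), the height function in a unit space-like direction $A$ has $g(\nabla\varphi,\nabla\varphi)=1-\varphi^{2}$, which is positive, zero and negative in different regions, the degenerate locus $\{\varphi^{2}=1\}$ being two light cones passing through the critical points $\pm A$. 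Hence there is no single tube to cap off; local warped charts of different causal type must be glued across a degenerate hypersurface, and your proposed dichotomy (trigonometric profile $\leftrightarrow$ quadric models, hyperbolic profile $\leftrightarrow$ items 3--4) is actually false: both profiles coexist on the same quadric, the correct dichotomy being existence versus non-existence of critical points. (ii) Near a critical point the level sets are non-compact quadrics collapsing onto a light cone, not small spheres collapsing to a point, so ``smoothness at the pole'' has no direct meaning, and geodesic completeness gives no Hopf--Rinow type control. The known proofs handle this by exploiting the linear ODE $(\varphi\circ\gamma)''=-c\,g(\gamma',\gamma')\,(\varphi\circ\gamma)$ along \emph{every} complete geodesic, which determines $\varphi$ from its $1$-jet at a single point and allows one to build the isometry with the model via the exponential map; none of this machinery appears in your sketch, so the heart of the theorem is presupposed rather than proved. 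A smaller gap on the ``if'' side: completeness of the models in items 3--4 over semi-Riemannian fibers is not automatic and is not implied by the Einstein verification of Corollary \ref{einsteinonedimentional} (the Been--Busemann metric $dx^{2}-e^{2x}dy^{2}$ recalled after Proposition \ref{completeriemannian} is precisely the incomplete exponential model over an indefinite fiber, which is why item 4 restricts $g_{N}$ to be Riemannian).
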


For our purposes it is important to know if a height function has zeros or not. This is because height functions can occur as warping functions
and warping functions do not admit zeros. The next  proposition  reveals which hyperquadrics  admit such functions.

\begin{proposition}\label{zeros12}
	Let $\varphi_{A}:\mathbb{R}^{n+1}_{\varepsilon}\rightarrow\mathbb{R}$ be the height function with respect to   $A\in\mathbb{R}^{n+1}_{\varepsilon}$, $A\neq0$ and $n\geq2$. Then $\varphi_{A}$ has no zeros on  $\mathbb{S}^{n}_{\varepsilon}(1/\sqrt{c})$ (resp.    $\mathbb{H}^n_{\varepsilon}(1/\sqrt{-c})$  if, and only if, $\varepsilon=n$ (resp. $\varepsilon=1$) and $A$ is a space like (resp. time like) or light like vector.
\end{proposition}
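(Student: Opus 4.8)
The plan is to reduce the statement to a question of linear algebra about the restriction of the ambient scalar product to the hyperplane $A^{\perp}=\ker\varphi_{A}$. Since $\varphi_{A}=\langle A,\cdot\rangle_{\varepsilon}$ is linear, its zero set on a hyperquadric $Q=\{x:\langle x,x\rangle_{\varepsilon}=r\}$ is precisely $Q\cap A^{\perp}$. By homogeneity, if $A^{\perp}$ contains a vector $v$ with $\langle v,v\rangle_{\varepsilon}$ of the same sign as $r$, then for $t=\sqrt{r/\langle v,v\rangle_{\varepsilon}}$ the vector $tv$ lies in $A^{\perp}\cap Q$; conversely every point of $Q\cap A^{\perp}$ is such a vector. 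Thus for the pseudosphere ($r=1/c>0$) the function $\varphi_{A}$ has no zeros if and only if $A^{\perp}$ contains no spacelike vector, i.e. $\langle\,,\,\rangle_{\varepsilon}|_{A^{\perp}}$ is negative semidefinite; for the pseudohyperbolic space ($r=1/c<0$) it has no zeros if and only if $A^{\perp}$ contains no timelike vector, i.e. $\langle\,,\,\rangle_{\varepsilon}|_{A^{\perp}}$ is positive semidefinite. First I would record this reduction, together with the standing assumption that the hyperquadric is nonempty.

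Next I would compute the inertia of $\langle\,,\,\rangle_{\varepsilon}|_{A^{\perp}}$ in terms of the causal character of $A$. If $A$ is non-null, then $\mathbb{R}A$ is a nondegenerate line and one has the orthogonal splitting $\mathbb{R}^{n+1}_{\varepsilon}=\mathbb{R}A\oplus A^{\perp}$; comparing indices shows that $A^{\perp}$ is nondegenerate of dimension $n$, with index $\varepsilon$ when $A$ is spacelike and index $\varepsilon-1$ when $A$ is timelike. If $A$ is lightlike, then $A\in A^{\perp}$ and the induced form is degenerate; here I would pick $B$ with $\langle A,B\rangle_{\varepsilon}\neq 0$, so that $H=\mathrm{span}\{A,B\}$ is a hyperbolic plane (index $1$) and $W=H^{\perp}$ is nondegenerate of dimension $n-1$ and index $\varepsilon-1$. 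Since $A^{\perp}=\mathbb{R}A\oplus W$ and $\langle sA+w,sA+w\rangle_{\varepsilon}=\langle w,w\rangle_{\varepsilon}$, the form on $A^{\perp}$ has the same positive and negative inertia as the form on $W$, augmented by one null direction.

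Finally I would substitute these signatures into the semidefiniteness conditions. For the pseudosphere, $A^{\perp}$ (dimension $n$) is negative semidefinite exactly when it has no positive direction: in the spacelike case the number of positive directions is $n-\varepsilon$, forcing $\varepsilon=n$; in the lightlike case the $(n-1)$-dimensional $W$ has $n-\varepsilon$ positive directions, again forcing $\varepsilon=n$; and the timelike case, with $n-\varepsilon+1\geq 1$ positive directions, is always excluded. This yields exactly $\varepsilon=n$ with $A$ spacelike or lightlike. The pseudohyperbolic case is dual and I would treat it by the sign-reversing correspondence $g\mapsto -g$, which interchanges spacelike and timelike vectors: positive semidefiniteness of $A^{\perp}$ forces $\varepsilon=1$ with $A$ timelike or lightlike, the nonemptiness of the hyperquadric ruling out the vacuous spacelike subcase. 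Conversely, in each retained configuration the same count gives $Q\cap A^{\perp}=\emptyset$, establishing the equivalence.

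I expect the lightlike case to be the only genuine obstacle: there $A^{\perp}$ is degenerate, so the naive orthogonal-complement argument breaks down and one must instead read the inertia off the hyperbolic-plane splitting $\mathbb{R}^{n+1}_{\varepsilon}=H\oplus W$. The remaining care is purely bookkeeping with the index conventions, keeping track of whether $\varepsilon$ denotes the ambient or the induced index when passing between $\mathbb{S}^{n}_{\varepsilon}$ and $\mathbb{H}^{n}_{\varepsilon}$, and recalling throughout that the stated result presupposes the hyperquadric to be nonempty.
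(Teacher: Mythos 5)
Your proof is correct and takes essentially the same route as the paper's: both reduce the question to whether the metric restricted to the hyperplane $A^{\perp}$ meets the hyperquadric, and both split into cases according to the causal character of $A$, with special care for the degenerate lightlike case. The differences are organizational only --- the paper exhibits explicit zeros using adapted orthonormal bases where you count inertia of $\langle\,,\,\rangle_{\varepsilon}|_{A^{\perp}}$, and it dismisses the pseudohyperbolic case as ``similar'' where you invoke the $g\mapsto -g$ duality; your closing caveat about whether $\varepsilon$ is the ambient or induced index for $\mathbb{H}^{n}_{\varepsilon}$ is indeed the one point where the paper's conventions must be read as ``ambient index $\varepsilon$'' for the stated conclusion $\varepsilon=1$ to come out.
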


\begin{proof} We first prove the proposition in the case of the sphere.  
	Since  we are considering $\mathbb{S}^{n}_{\varepsilon}(1/\sqrt{c}) \neq\emptyset$,
	we can assume $0\leq\varepsilon\leq n$, i.e.,  $\varepsilon\neq n+1$. Moreover, $\varphi_{A}$ is a linear function, hence  $\mathbb{R}^{n+1}=\text{Ker}(\varphi_{A})\oplus \text{Im}(\varphi_{A})$.
	where $\text{Ker}  (\varphi_{A})=(A)^{\perp}
	\subset\mathbb{R}^{n+1}$. Since $A\neq0$, it follows that $\dim\{\text{Ker}(\varphi_{A})\}=n\geq2$ and 
	$\dim\{\text{Im}(\varphi_{A})\}=1$.
	In what follows, we will analyze each case  according to  $A$ being a time like, space like or light like vector. We will  consider  an  appropriate 
	orthonormal basis in each case, $\{e_{1},\ldots,e_{\varepsilon},e_{\varepsilon+1},\ldots,e_{n+1}\}$ for $\mathbb{R}^{n+1}_\varepsilon$ such that $e_1,...,e_{\varepsilon}$ are time like and $e_{\varepsilon+1},\ldots,e_{n+1}$ are space like. 
	
	Suppose that $A$ is time like. In this case,  $1\leq \varepsilon\leq n$ and we choose the basis such that  $e_{\varepsilon}=A/\sqrt{|<A,A>_\varepsilon|}$. Therefore,  $e_{n+1}$ and $e_{\varepsilon}$ are orthogonal hence,  $(1/\sqrt{c})e_{n+1}\in A^{\perp}
	\cap\mathbb{S}^{n}_{\varepsilon}(1/\sqrt{c})$, i.e., 
	$\varphi_A$ has zeros on the sphere. 
	
	Suppose that $A$ is space like. We consider the basis on $\mathbb{R}^{n+1}_\varepsilon$, such that  $e_{\varepsilon+1}=A_{\varepsilon}/|A_{\varepsilon}|$.	
	If $0\leq\varepsilon\leq n-1$, then $e_{\varepsilon+1}$ and $e_{n+1}$ are orthogonal and hence 	$(1/\sqrt{c})e_{n+1}\in A^{\perp}\cap\mathbb{S}^{n}_{\varepsilon}(1/\sqrt{c})$.
	If $\varepsilon=n$ then  $A^{\perp}$ is negative definite since it is generated by $\{e_{1},\ldots,e_{n}\}$. Therefore
	$A^{\perp}\cap\mathbb{S}^{n}_{n}(1/\sqrt{c})=\emptyset$, 
	i.e., $\varphi_A$ has no zeros on the sphere. 
	
	Suppose that $A$ is light like, then  $1\leq\epsilon\leq n$ and it is not so difficult to see that there exist orthogonal vectors $V_{1}, V_{2}\in\mathbb{R}^{n+1}$ such that $V_{1}\neq 0$ is time like, $V_{2}\neq0$ is space like and $A=V_{1}+V_{2}$. We consider the basis so that  
	$e_{\varepsilon}=V_{1}/|\sqrt{<V_{1},V_1>_\varepsilon}|$ and $e_{\varepsilon+1}=V_{2}/|V_{2}|$. If $\varepsilon\leq n-1$, then $(1/\sqrt{c})e_{\varepsilon+2}\in(A^{\perp}\cap\mathbb{S}^{n}_{\varepsilon}(1/\sqrt{c})$ . Therefore, $\varphi_A$ has no zeros on the sphere if, and only if, $\varepsilon=n$. 
	
	This completes the proof for  the case  of the sphere. 
	Considering adequate changes, the  proof for the hyperbolic space is similar. 
\end{proof}

The next result is due to Kerbrat \cite{kerbrat} and  it can be found in Kuenel's paper \cite{kunel3}.
\begin{proposition}\label{isoletedpoints}\cite{kerbrat}
	Let $\varphi:M\rightarrow\mathbb{R}$ be a solution of the equation (\ref{typeobataequation}). Then the  critical points of $\varphi$ are isolated.
\end{proposition}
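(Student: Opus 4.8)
The plan is to read equation (\ref{typeobataequation}) as a statement about the linearization of the gradient field. Since $\nabla\nabla\varphi(X,Y)=g(\nabla_X\nabla\varphi,Y)$ and the right-hand side of (\ref{typeobataequation}) is $-(c\varphi+b)g(X,Y)$, the equation is equivalent to
\begin{equation*}
\nabla_X\nabla\varphi=-(c\varphi+b)X\qquad\text{for all }X.
\end{equation*}
I assume $\varphi$ is non constant (otherwise every point is critical) and $M$ is connected. Let $p$ be a critical point, i.e. $\nabla\varphi(p)=0$. Evaluating the displayed identity at $p$, the linearization of the vector field $\nabla\varphi$ at its zero $p$ is the endomorphism $w\mapsto\nabla_w\nabla\varphi|_p=-(c\varphi(p)+b)w$, that is, the scalar $-(c\varphi(p)+b)$ times the identity. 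The proof then splits according to whether $c\varphi(p)+b$ vanishes.

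If $c\varphi(p)+b\neq0$, this linearization is an invertible endomorphism of $T_pM$; equivalently the Hessian of $\varphi$ at $p$ is the nondegenerate form $-(c\varphi(p)+b)g$. Hence $p$ is a nondegenerate zero of the gradient field, and by the inverse function theorem applied to $\nabla\varphi$ in a local trivialization it is an isolated zero, i.e. an isolated critical point. This settles the generic case immediately.

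The main obstacle is the degenerate case $c\varphi(p)+b=0$, where the Hessian vanishes at $p$ and the previous argument says nothing; the goal is to exclude it for non constant $\varphi$. Here I would integrate (\ref{typeobataequation}) along geodesics. For $v\in T_pM$ set $\gamma(t)=\exp_p(tv)$ and $\psi(t)=\varphi(\gamma(t))$; using $\nabla_{\gamma'}\gamma'=0$ one gets $\psi''(t)=\nabla\nabla\varphi(\gamma',\gamma')=-(c\psi(t)+b)\,\epsilon$ with $\epsilon=g(v,v)$, together with $\psi(0)=\varphi(p)$ and $\psi'(0)=g(\nabla\varphi(p),\gamma'(0))=0$. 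When $c\varphi(p)+b=0$, the constant function $\psi\equiv\varphi(p)$ solves this linear second-order initial value problem, so by uniqueness $\varphi$ is constant equal to $\varphi(p)$ along every non null geodesic issuing from $p$. The semi-Riemannian subtlety is precisely that this conclusion may fail along null directions; but the null cone $\{v:g_p(v,v)=0\}$ is the zero set of a nonzero quadratic form, hence nowhere dense in $T_pM$, so the non null geodesics sweep out a dense subset of a normal neighborhood of $p$. By continuity of $\varphi$ this forces $\varphi\equiv\varphi(p)$ on a whole neighborhood of $p$.

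Finally I would propagate this local constancy globally. Consider $\Omega=\mathrm{int}\,\{q:\varphi(q)=\varphi(p)\}$, which is open and, by the previous step, contains $p$. It is also closed: a boundary point $q$ of $\Omega$ satisfies $\varphi(q)=\varphi(p)$ and, since $\nabla\varphi$ vanishes on $\Omega$ and is continuous, $\nabla\varphi(q)=0$; thus $q$ is a critical point with $c\varphi(q)+b=c\varphi(p)+b=0$, so rerunning the geodesic argument at $q$ places a neighborhood of $q$ inside $\Omega$. As $M$ is connected and $\Omega$ is nonempty, open and closed, $\Omega=M$ and $\varphi$ is constant, contradicting the standing assumption. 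Therefore the degenerate case cannot occur, every critical point satisfies $c\varphi(p)+b\neq0$, and by the second paragraph it is isolated. I expect the delicate point to be the treatment of null directions in the degenerate case, which is why the density of non null vectors is the key semi-Riemannian ingredient.
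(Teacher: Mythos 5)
Your argument is correct, and it is worth noting that the paper itself gives no proof of Proposition \ref{isoletedpoints}: the statement is simply attributed to Kerbrat and to K\"uhnel--Rademacher. Your proof is the standard one from those references, and it holds up. Two remarks. First, you are right that the statement must implicitly assume $\varphi$ non constant (on each connected component), since otherwise every point is a non-isolated critical point; your open--closed argument is the correct way to rule out the degenerate case $c\varphi(p)+b=0$ for non constant $\varphi$. Second, the \emph{semi-Riemannian subtlety} you flag is not actually there: along a null geodesic one has $\epsilon=g(v,v)=0$, so the ODE degenerates to $\psi''=0$, and the initial conditions $\psi(0)=\varphi(p)$, $\psi'(0)=0$ again force $\psi\equiv\varphi(p)$. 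The density-of-non-null-directions step is therefore superfluous (though harmless), and the degenerate case reads uniformly: for every $v\in T_pM$ the initial value problem $\psi''=-\epsilon(c\psi+b)$, $\psi(0)=\varphi(p)$, $\psi'(0)=0$ admits the constant solution, so by uniqueness $\varphi$ is constant on a normal neighborhood of $p$, and connectedness finishes the contradiction. The nondegenerate case is exactly the observation that the Hessian at such a critical point equals $-(c\varphi(p)+b)g_p$, a nonzero multiple of a nondegenerate bilinear form, so $p$ is a nondegenerate, hence isolated, zero of $\nabla\varphi$.
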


The local classification below can be found in \cite{brinkmann} or \cite{kunel1} and it is of fundamental inportance for the classification of complete manifolds admitting solutions to equation (\ref{typeobataequation}). 

\begin{proposition}\label{localcharacterization}\cite{brinkmann}
	Let $(M,g)$ be a Riemannian or semi-Riemannian-manifold. The following are equivalent:
	\begin{enumerate}
		\item There is a non constant solution $\varphi$ of 
		\begin{equation*}
		\nabla\nabla\varphi-(\Delta\varphi/n)g=0,
		\end{equation*} 
		in a neighborhood of a point $p\in M$ such that  $g(\nabla\varphi,\nabla\varphi)\neq 0$.
		\item There is a neighborhood $U$ of $p\in M$, a smooth function $\varphi:(-\varepsilon,\varepsilon)\rightarrow\mathbb{R}$ with $\varphi'(t)\neq0$, for all $t\in(-\varepsilon,\varepsilon)$ and a pseudo-Riemannian manifold $(N,g_{N})$ such that $(U,g)$ is isometric to the warped product
		\begin{equation*}
		((-\varepsilon,\varepsilon)\times_{\varphi'}N,\pm dt^{2}+(\varphi')^{2}g_{N}),
		\end{equation*}
		where  $sgn(g(\varphi',\varphi'))=\pm 1$.
	\end{enumerate}
\end{proposition}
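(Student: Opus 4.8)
The plan is to prove the two implications separately, with the bulk of the work in $(1)\Rightarrow(2)$. Throughout I write $\mu=\Delta\varphi/n$, so that the hypothesis $\nabla\nabla\varphi-(\Delta\varphi/n)g=0$ reads $\nabla\nabla\varphi=\mu g$, equivalently $\nabla_X\nabla\varphi=\mu X$ for every vector field $X$. The implication $(2)\Rightarrow(1)$ is the quick one: on $((-\varepsilon,\varepsilon)\times_{\varphi'}N,\pm dt^2+(\varphi')^2 g_N)$ I would seek a potential $\psi=\psi(t)$ depending only on the base coordinate and apply the Hessian formulas of Proposition \ref{prop_hesswarped} with base $(-\varepsilon,\varepsilon)$, warping function $\varphi'$ and fiber $N$. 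Since $\psi$ is constant on the fiber the mixed term vanishes, the base term equals $\psi''\,dt^2$, and the fiber term equals $\varphi'(\nabla_B\varphi')\psi\,g_N$; requiring $\nabla\nabla\psi=\mu g$ for a single function $\mu$ forces $\psi'=\varphi'$, so $\psi=\varphi$ (up to a constant) is a solution with $g(\nabla\psi,\nabla\psi)=\pm(\varphi')^2\neq0$.

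For $(1)\Rightarrow(2)$ I would first extract the geometry forced by $\nabla\nabla\varphi=\mu g$. Setting $\rho=g(\nabla\varphi,\nabla\varphi)$, differentiation gives $X(\rho)=2\,\nabla\nabla\varphi(X,\nabla\varphi)=2\mu\,X(\varphi)$, i.e. $\nabla\rho=2\mu\nabla\varphi$; hence $\nabla\rho$ is everywhere proportional to $\nabla\varphi$, so $\rho$ is constant on the connected level sets of $\varphi$ and is therefore locally a function of $\varphi$ alone. Since $\rho\neq0$ near $p$, the unit field $T=\nabla\varphi/\sqrt{|\rho|}$ is well defined with $g(T,T)=\operatorname{sgn}(\rho)=:\epsilon=\pm1$, and a short computation using $\nabla_X\nabla\varphi=\mu X$ together with $\nabla\rho=2\mu\nabla\varphi$ shows $\nabla_T T=0$, so the integral curves of $T$ are unit-speed geodesics. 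I would then introduce coordinates $(t,y)$ in which $t$ is the signed arclength along these geodesics (so $T=\partial_t$) and $y$ parametrizes the level set $N$ of $\varphi$ through $p$, transported by the flow of $T$. Because $d\varphi(T)=\rho/\sqrt{|\rho|}=\epsilon\sqrt{|\rho|}$ depends only on $\varphi$, the function $\varphi$ depends only on $t$ in these coordinates, and one computes $\varphi'=\epsilon\sqrt{|\rho|}$ and $\varphi''=\epsilon\mu$.

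It then remains to identify the metric. For $V$ tangent to the level sets one has $V(\sqrt{|\rho|})=0$, so writing $\nabla\varphi=\sqrt{|\rho|}\,T$ in $\nabla_V\nabla\varphi=\mu V$ yields $\nabla_V T=(\mu/\sqrt{|\rho|})V$; thus the level sets are totally umbilic with shape operator $(\mu/\sqrt{|\rho|})\,\mathrm{Id}$, and the normal $T$ stays orthogonal to the flow-invariant coordinate fields $\partial_{y_i}$ since $T\langle T,\partial_{y_i}\rangle=\tfrac12\partial_{y_i}\langle T,T\rangle=0$. Writing $g_t$ for the induced metric on $\{t=\text{const}\}$ and using $[T,\partial_{y_i}]=0$, I would compute $\tfrac{d}{dt}g_t(V,W)=2\,g(\nabla_V T,W)=(2\mu/\sqrt{|\rho|})\,g_t(V,W)$. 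Since $2\mu/\sqrt{|\rho|}=2\varphi''/\varphi'=\tfrac{d}{dt}\log((\varphi')^2)$, this linear ODE integrates to $g_t=(\varphi')^2 g_N$ with $g_N:=g_0/(\varphi'(0))^2$. Combined with $g(T,\partial_{y_i})=0$ and $g(T,T)=\epsilon$, this gives $g=\epsilon\,dt^2+(\varphi')^2 g_N$, which is the asserted warped product, with $\operatorname{sgn}(g(\varphi',\varphi'))=\epsilon$.

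The main obstacle is the construction in $(1)\Rightarrow(2)$, and in particular the bookkeeping of the metric evolution: one must verify that the adapted coordinates really diagonalize $g$ (no cross terms survive along the flow), identify the umbilicity factor $\mu/\sqrt{|\rho|}$ with $\varphi''/\varphi'$, and solve the resulting ODE for $g_t$, all while keeping track of the sign $\epsilon=\operatorname{sgn}(\rho)$ that is genuinely present in the semi-Riemannian setting and invisible in the Riemannian case. The possibility that $\rho=g(\nabla\varphi,\nabla\varphi)$ is negative is exactly what produces the two alternatives $\pm dt^2$, and it is the nondegeneracy hypothesis $g(\nabla\varphi,\nabla\varphi)\neq0$ that must be in force throughout to make $T$, the arclength parameter, and the splitting all well defined.
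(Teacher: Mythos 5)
The paper does not actually prove this proposition: it is quoted from the literature (Brinkmann, K\"uhnel) and used as a black box, so there is no in-paper argument to compare yours against. Your proof is correct and is essentially the classical one from those references. The direction $(2)\Rightarrow(1)$ via the warped-product Hessian formulas is fine, and in $(1)\Rightarrow(2)$ the key chain of deductions is all there and checks out: $\nabla\rho=2\mu\nabla\varphi$ forces $\rho$ to be a function of $\varphi$; the unit normal $T=\nabla\varphi/\sqrt{|\rho|}$ is geodesic; the level sets are umbilic with factor $\mu/\sqrt{|\rho|}=\varphi''/\varphi'$; and the first-order ODE for $g_t$ integrates to $(\varphi')^2g_N$. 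Two points are stated more tersely than they deserve and should be spelled out in a final write-up: (i) that $(t,y)$ really is a chart and that $\partial_{y_i}\varphi\equiv 0$ for \emph{all} $t$, not just $t=0$ (this follows from the linear ODE $\partial_t(\partial_{y_i}\varphi)=\partial_{y_i}(\epsilon\sqrt{|\rho|})=\epsilon\mu(\partial_{y_i}\varphi)/\sqrt{|\rho|}$ with zero initial data, and it is needed before you may apply the umbilicity relation $\nabla_{\partial_{y_i}}T=(\mu/\sqrt{|\rho|})\partial_{y_i}$ at every time slice); and (ii) that the level set through $p$ is a nondegenerate hypersurface, which holds because its normal $T$ satisfies $g(T,T)=\epsilon\neq0$ --- this is exactly where the hypothesis $g(\nabla\varphi,\nabla\varphi)\neq0$ enters, as you note. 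With those two remarks made explicit the argument is complete.
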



\end{document}